\theoremstyle{theorem}
\newtheorem{theorem}{Theorem}[section]
\newtheorem{corollary}[theorem]{Corollary}
\newtheorem{lemma}[theorem]{Lemma}
\newtheorem{proposition}[theorem]{Proposition}
\theoremstyle{definition}
\newtheorem{example}[theorem]{Example}
\theoremstyle{remark}
\newtheorem{remark}[theorem]{Remark}
\numberwithin{equation}{section}
\tikzset{commutative diagrams/.cd}
\newcommand\Z{{\mathbb Z}}
\newcommand\N{{\mathbb N}}
\newcommand\Fp{{{\mathbb F}_p}}
\newcommand\Fq{{{\mathbb F}_q}}
\newcommand\Zp{{{\mathbb Z}_p}}
\def\O{\mathcal O}
\DeclareMathOperator{\im}{im}
\DeclareMathOperator{\Spec}{Spec}
\DeclareMathOperator{\GL}{GL}
\newcommand\subeq{\subseteq}
\newcommand\onto{\twoheadrightarrow}
\newcommand{\map}[1][]{{\xrightarrow{#1}}}
\DeclarePairedDelimiter{\abs}{\lvert}{\rvert}
\DeclarePairedDelimiter{\set}{\{}{\}}
\DeclarePairedDelimiter{\pairing}{\langle}{\rangle}
\DeclarePairedDelimiter{\parens}{\lparen}{\rparen}
\DeclarePairedDelimiter\floor{\lfloor}{\rfloor}
\newcommand{\tuparens}[1]{\textup{(}#1\textup{)}}
\newcommand{\tu}{\textup}
\DeclareMathOperator{\Cont}{Cont}
\DeclareMathOperator{\Hilb}{Hilb}
\DeclareMathOperator{\LT}{LT}
\DeclareMathOperator{\LM}{LM}
\DeclareMathOperator{\Span}{span}
\newcommand{\lex}{\mathrm{lex}}
\newcommand{\hlex}{\mathrm{hlex}}
\newcommand{\phlex}{\prec_\hlex}
\newcommand{\inv}{\mathrm{inv}}
\begin{document}
\title{Lattices in $\Fq[[T]]^d$ and spiral shifting operators}
\author{
Yifeng Huang\footnote{Dept.\ of Mathematics, University of Southern California. Email Address: {\tt yifenghu@usc.edu}}, Ruofan Jiang\footnote{Dept.\ of Mathematics, University of California, Berkeley. Email Address: {\tt ruofanjiang@berkeley.edu}}
}
\date{\today}

\maketitle

\begin{abstract}
We investigate the algebra and combinatorics of an analogue of the Hermite normal form that classifies finite-index submodules of $\Fq[[T]]^d$. We identity both normal forms as instances of Gr\"obner basis theory under different monomial orders, where the Hermite normal form corresponds to the lex order, and the new normal form the hlex order. We note that the hlex normal form recovers the Smith normal form, a feature not enjoyed by the Hermite normal form. We also identify the combinatorial structure underlying the cell decomposition induced by the hlex normal form, which appears to be of independent interest. Notably, the statistics tracking the cell dimensions is compatible, in a certain way, with a collection of $d$ ``spiral shifting operators'' on $\N^d$, which pairwise commute and collectively act freely and transitively. Using these operators, we give direct proofs of some new combinatorial identities obtained by translating the results of Solomon \cite{solomon1977zeta} and Petrogradsky \cite{petrogradsky2007multiple} in terms of the hlex normal form. 
\end{abstract}

\section{Introduction}
Counting algebraic objects such as modules and rings frequently leads to interesting zeta functions \cite{cohenlenstra1984heuristics, huang2023mutually,kmtb2015distribution}. Counting submodules of a given module is no exception. Consider the Solomon zeta function
\begin{equation}
\zeta_\Z(\Z^d;s)=\sum_{M} (\Z^d:M)^{-s},
\end{equation}
where the sum extends over all finite-index submodules (or lattices) of $\Z^d$, and $(\Z^d:M)$ denotes the index of $M$. In 1977, Solomon \cite{solomon1977zeta} proved a beautiful formula
\begin{equation}\label{eq:solomon_Z}
\zeta_\Z(\Z^d;s)=\zeta(s)\zeta(s-1)\dots \zeta(s-d+1)
\end{equation}
using the Hermite normal form; see also four other proofs in the monograph \cite{lubotzkysegal2003subgroup}. The motivation of this paper is an analogous formula over the power series ring $\Fq[[T]]$:
\begin{equation}\label{eq:count_submodule}
\sum_{M} t^{\dim_{\Fq} (\Fq[[T]]^d/M)}=\frac{1}{(1-t)(1-tq)\dots(1-tq^{d-1})},
\end{equation}
where the sum extends over all finite-index submodules of $\Fq[[T]]^d$. 

Solomon's proof applies to any Dedekind domain $R$ with a well-defined Dedekind zeta function $\zeta_R(s)$, giving the direct analog of \eqref{eq:solomon_Z}:
\begin{equation}\label{eq:solomon_dedekind}
\zeta_R(R^d;s)=\zeta_R(s)\zeta_R(s-1)\dots \zeta_R(s-d+1),
\end{equation}
and hence \eqref{eq:count_submodule} as a special case. The content of this paper concerns a different proof as well as several independently interesting combinatorial objects arising from it. In order to introduce our proof, we shall first explain Solomon's proof of \eqref{eq:count_submodule} in the context of Gr\"obner basis theory \cite{hironaka1964resolution}. The theory of the Hermite normal form states that every finite-index submodule $M$ of $\Fq[[T]]^d$ can be uniquely expressed as the column span of a matrix of the form
\begin{equation}\label{eq:hermite}
M=\im 
\begin{bmatrix}
T^{n_1} & • & • & • \\ 
a_{21}(T) & T^{n_2} & • & • \\ 
\vdots & \vdots & \ddots & • \\ 
a_{d1}(T) & a_{d2}(T) & \cdots & T^{n_d}
\end{bmatrix},
\end{equation}
where $n_i\geq 0$ and each $a_{ij}(T)$ is a polynomial in $T$ of degree strictly less than $n_i$. (We say the zero polynomial has degree $-1$.) Therfore, the set of finite-index submodules of $\Fq[[T]]^d$ is \emph{stratified} into strata indexed by $n_1,\dots,n_d\geq 0$. From it, the formula \eqref{eq:count_submodule} follows elementarily, as in \cite{solomon1977zeta}. Now, we denote by $u_1,\dots,u_d$ the standard basis vectors of $\Fq[[T]]^d$ and let $f_j=T^{n_j}u_j+\sum_{i=j+1}^d a_{ij}(T)u_i$. Then $M$ is the submodule generated by $f_i$. Moreover, the requirement for the Hermite normal form is equivalent to saying that $(f_1,\dots,f_d)$ is a reduced Gr\"obner basis with respect to the monomial order $\prec_{\lex}$, where $t^a u_i \prec_{\lex} t^b u_j$ if $i<j$ or ($i=j$ and $a<b$). 

In the rest of this paper, we consider a different monomial order $\prec_{\hlex}$, where $t^a u_i \prec_{\hlex} t^b u_j$ if $a<b$ or ($a=b$ and $i<j$).\footnote{
The order $\prec_{\hlex}$ is both a lexicographic order with $u_1\prec \dots \prec u_d\prec T$ and a homogeneous lexicographic order with $u_1\prec \dots \prec u_d$ (the position of $T$ does not matter). We choose to call it ``hlex'' for the flavor reason that the set of monomials under $\prec_{\hlex}$ is order-isomorphic to $\N$, while it is not the case in $\prec_{\lex}$. 
} We give an alternative proof of \eqref{eq:count_submodule} by classifying reduced Gr\"obner bases with respect to $\prec_{\hlex}$. This gives a different stratification of the set of finite-index submodules of $\Fq[[T]]^d$, also indexed by $n_1,\dots,n_d\geq 0$. We compute the cardinality of each stratum in terms of a statistics we denote by $W$ \eqref{eq:def_weight_first}. The left-hand side of \eqref{eq:count_submodule} can now be expressed as a sum involving the $W$-statistics over all strata. To study this sum, we consider certain ``spiral shifting'' operators on the index set $X=\N^d=\set{(n_1,\dots,n_d):n_i\geq 0}$ of the strata. We prove combinatorial properties of these operators and how they interact with the $W$-statistics (Lemma \ref{lem:weight}), which leads to our proof of \eqref{eq:count_submodule}.

The hlex-stratification above can be interpreted as another normal form \eqref{eq:hlex_normal_form} which, just like the Hermite normal form, classifies square matrices over $\Fq[[T]]$ up to column operations. However, unlike the Hermite normal form, the new normal form recovers the Smith normal form by its diagonal entries (Proposition \ref{prop:hlex_recovers_smith}). As an application of this normal form, we give an alternative proof of a theorem of Petrogradsky \cite{petrogradsky2007multiple} that counts submodules $M$ of $\Fq[[T]]^d$ such that the quotient $\Fq[[T]]^d/M$ has a given $\Fq[[T]]$-module structure (a refinement of the information of cardinality). We do it by reducing the theorem to a refined sum involving the $W$-statistics. Our alternative proof also provides a direct explanation of a permutation sum in Petrogradsky's formula; see Remark \ref{rmk}. 

Apart from their applications to module counting, the spiral shifting operators also give a nontrivial self-bijection on $\N^d$ (Theorem \ref{thm:free_transitive}) that might lead to other combinatorial identities. We next state the combinatorial results independent of the context of submodule counting. 

\subsection{Main combinatorial results}
Fix $d\geq 1$. We denote by $\N$ the set of nonnegative integers, and we use the standard notation $[d]:=\set{1,2,\dots,d}$. Let $X=\N^d$ be the set of $d$-tuples $(n_1,\dots,n_d)$ of nonnegative integers. For $x=(n_1,\dots,n_d)\in X$, let $n(x)=n_1+\dots+n_d$ and let $W(x)\in \N$ be the statistics defined in Section \ref{sec:statistics}. The key identity we prove about the $n$- and $W$-statistics is the following result.

\begin{theorem}[Theorem \ref{thm:full_sum}]
As power series in $\Z[[t,q]]$, we have
\begin{equation}
\sum_{x\in X} t^{n(x)}q^{W(x)} = \frac{1}{(1-t)(1-tq)\dots(1-tq^{d-1})}.
\end{equation}

In particular, for given $n,W\geq 0$, the number of elements $x$ in $X$ with $n(x)=n$ and $W(x)=W$ equals the number of size-$W$ partitions whose Young diagram fits in an $n\times (d-1)$ rectangle.
\label{thm:full_sum_intro}
\end{theorem}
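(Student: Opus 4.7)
The plan is to exploit the free transitive action of the commuting operators $g_1, \ldots, g_d$ on $X$ (Theorem \ref{thm:free_transitive}) to reduce the left-hand sum to a $d$-fold geometric series. Starting from the base point $\mathbf{0} = (0, \ldots, 0) \in X$, at which $n(\mathbf{0}) = W(\mathbf{0}) = 0$, free transitivity and commutativity together give a bijection $\N^d \to X$ via
\[
(a_1, \ldots, a_d) \mapsto g_1^{a_1} \cdots g_d^{a_d}(\mathbf{0}).
\]
Summing over $X$ is then the same as summing over exponent vectors in $\N^d$.

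The next step is to track how each $g_j$ changes $n$ and $W$. Since $g_j$ is a ``shifting'' operator on $\N^d$, one expects $n(g_j(x)) = n(x) + 1$, so that $n\bigl(g_1^{a_1}\cdots g_d^{a_d}(\mathbf{0})\bigr) = a_1 + \cdots + a_d$. For $W$, I would invoke Lemma \ref{lem:weight}, which specifies the interaction of $g_j$ with the $W$-statistic. Combined with commutativity (so that the cumulative change in $W$ along any word in the $g_j$'s depends only on the exponent vector, not the order in which the operators are applied), I expect this to yield an additive formula
\[
W\bigl(g_1^{a_1} \cdots g_d^{a_d}(\mathbf{0})\bigr) = \sum_{j=1}^d (j-1)\, a_j,
\]
with the coefficient $(j-1)$ matching the power of $q$ attached to $g_j$ on the right-hand side of the theorem.

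Substituting the bijection then factors the sum as a product of geometric series:
\[
\sum_{x \in X} t^{n(x)} q^{W(x)} \;=\; \sum_{(a_1, \ldots, a_d) \in \N^d} \prod_{j=1}^d (tq^{j-1})^{a_j} \;=\; \prod_{j=1}^d \frac{1}{1 - tq^{j-1}}.
\]
For the partition reformulation, the coefficient of $t^n q^W$ on the left counts tuples $(a_1, \ldots, a_d) \in \N^d$ with $\sum_j a_j = n$ and $\sum_j (j-1)\, a_j = W$. Sending such a tuple to the partition having $a_j$ parts equal to $j-1$ (and then discarding the zero parts) gives the standard bijection with partitions of $W$ whose largest part is at most $d-1$ and whose number of nonzero parts is at most $n$, i.e.\ partitions whose Young diagram fits inside an $n \times (d-1)$ rectangle.

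The main obstacle is pinning down the exact $W$-behavior in the second step. A priori, the increment $W(g_j(x)) - W(x)$ may depend on $x$, not just on $j$; path-independence from commutativity ensures that the cumulative change depends only on the exponent vector, but extracting the precise per-operator contribution of $(j-1)$ is the heart of the argument and rests entirely on the detailed form of Lemma \ref{lem:weight}. Once that input is in hand, the proof is a routine geometric-series calculation, and the partition interpretation is the classical multiplicity-encoding bijection.
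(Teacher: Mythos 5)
Your proposal is correct and matches the paper's proof essentially step for step: both reduce the sum over $X$ to a sum over exponent vectors in $\N^d$ via Theorem~\ref{thm:free_transitive}, use Lemmas~\ref{lem:size} and~\ref{lem:weight} to show $\Cont(g_1^{a_1}\cdots g_d^{a_d}(\mathbf{0})) = \prod_j (tq^{j-1})^{a_j}$, and conclude with the same geometric-series factorization and standard multiplicity-encoding of partitions. The key input you flag as the ``heart of the argument,'' namely $W(g_j(x)) = W(x) + (j-1)$ independently of $x$, is exactly what Lemma~\ref{lem:weight} supplies, so the gap you identify is already filled by the cited lemma.
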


To prove Theorem \ref{thm:full_sum_intro}, we define self-maps $g_1,\dots,g_d$ on $X$ (Section \ref{sec:definition}). They satisfy the following properties. First, they commute.

\begin{theorem}[Theorem \ref{thm:commute}]
For all $1\leq j,j'\leq d$, we have
\begin{equation}
g_{j'}\circ g_j=g_j\circ g_{j'}
\end{equation}
as maps from $X$ to $X$.
\label{thm:commute_intro}
\end{theorem}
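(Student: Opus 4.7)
The plan is to unfold the explicit combinatorial description of $g_j$ given in Section \ref{sec:definition} and verify $g_{j'} \circ g_j = g_j \circ g_{j'}$ by direct computation on a general $x = (n_1, \dots, n_d) \in X$. The case $j = j'$ is trivial, and by the symmetry of the assertion we may restrict to $1 \leq j < j' \leq d$.

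First I would compute $y := g_j(x)$ coordinate by coordinate from the definition, carefully recording how the coordinates of $y$ compare with those of $x$ --- in particular, which coordinate has been incremented and whether any tie-breaking relation among the $n_i$ has been altered. This is the essential bookkeeping item, because the piecewise rule defining $g_{j'}$ at the next step depends on the shape of its input. I would then compute $z := g_{j'}(y)$ in each resulting case.

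In parallel I would carry out the opposite order: compute $y' := g_{j'}(x)$ and then $z' := g_j(y')$. The proof reduces to checking $z = z'$ in every case of the analysis. I expect the cases to be indexed by a small amount of finite combinatorial data (for example, which coordinates of $x$ determine the ``spiral branch'' selected by $g_j$ and $g_{j'}$, and the relative position of $j,j'$ with respect to that branch), so that only a handful of genuinely distinct subcases arise.

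The main obstacle will be controlling the branch selection: when $g_j$ transforms $x$ into $y$, the branch of $g_{j'}$ that $y$ lands in may differ from the branch of $g_{j'}$ at $x$, and likewise when the roles of $j$ and $j'$ are reversed. The cleanest resolution would be to find a common closed-form expression for both $z$ and $z'$ purely in terms of the coordinates of $x$, making the symmetry in $j$ and $j'$ manifest; in view of the intended description of $g_j$ as ``push along the hlex spiral and park the new entry in slot $j$'', I would look for such a factorization before embarking on the full case check. Failing that, one carries out the branch analysis by hand, the expectation being that swapping the order of applications permutes the intermediate branches in exactly the way needed for the two final outputs to coincide.
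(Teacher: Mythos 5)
Your plan --- compute both compositions explicitly and check they agree --- is the right general strategy, but it is missing the one idea that makes the computation tractable, and without it the ``branch analysis by hand'' you fall back on is substantially harder than you seem to expect. The paper's first move, for $j' < j$, is to observe that both $g_{j'}$ and $g_j$ fix the $j'-1$ lowest points $\delta^1(x),\dots,\delta^{j'-1}(x)$; deleting those points and re-indexing the surviving seats reduces the whole claim to the case $j' = 1$. That reduction is decisive, because $g_1$ has the uniform description ``shift every point one step up the spiral'': it raises every height by exactly $1/d$ and so never alters the height-ranking among the points. With $j' = 1$ pinned down, the paper writes $\Delta(g_1(g_j(x)))$ and $\Delta(g_j(g_1(x)))$ in explicit coordinates using the convention $(d+1,n) := (1,n+1)$, and the verification splits into just two short sub-cases (whether the rightmost high point sits in seat $d$ or not).

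Your hoped-for escape --- a closed-form expression for $z$ and $z'$ that is manifestly symmetric in $j$ and $j'$ --- does not materialize, and the paper's formulas indicate why: the tractable expressions appear only \emph{after} the $j \leftrightarrow j'$ symmetry has been deliberately broken by reducing to $j'=1$. For general $j, j'$ the set of ``available seats'' seen by the second operator depends on how the first operator rearranged the height order, which is exactly the bookkeeping you flag as the main obstacle. So I would advise you to establish the reduction first: both operators fix the lowest $\min(j,j')-1$ points, hence those points and their seats may be removed, and after re-indexing the remaining seats as $1,\dots,d-\min(j,j')+1$ you need only prove that $g_1$ commutes with a single $g_{j''}$. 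That converts the branch-control problem into the one-line observation that $g_1$ preserves the height ranking, after which the two explicit sub-cases are brief.
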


To state the next property, we note that as a result of Theorem \ref{thm:commute_intro}, the operators $g_j$ induce an action of the free abelian semigroup $\Gamma:=\N^d$ on $X$. For $a=(a_1,\dots,a_d)\in \Gamma$, the action is defined as
\begin{equation}\label{eq:def_action_intro}
a\cdot x:=g_1^{a_1}\circ \dots \circ g_d^{a_d}(x)
\end{equation}
for $x\in X$. 

The next result states that the action is free and transitive.

\begin{theorem}[Theorem \ref{thm:free_transitive}]
Let $\mathbf{0}=(0,\dots,0)\in X$. Then for any $x\in X$, there exists a unique $a\in \Gamma$ such that
\begin{equation}
a\cdot \mathbf{0} = x.
\end{equation}
\label{thm:free_transitive_intro}
\end{theorem}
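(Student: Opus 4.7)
The plan is to show that the evaluation map $\phi\colon \Gamma \to X$ defined by $\phi(a) = a \cdot \mathbf{0}$ is a bijection; by Theorem~\ref{thm:commute_intro} this map is well-defined. My strategy is to reduce the problem to a single covering claim about the images $g_j(X)$, after which the freeness (injectivity) will be automatic via a cardinality match on level sets.

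From the ``spiral shifting'' picture, and matching the factored form of the generating function in Theorem~\ref{thm:full_sum_intro}, I expect each operator to satisfy $n(g_j(x)) = n(x) + 1$, so that $\phi$ sends $\Gamma_k := \{a \in \Gamma : \abs{a} = k\}$ into $X_k := \{x \in X : n(x) = k\}$. Both of these finite sets have cardinality $\binom{k+d-1}{d-1}$, so any surjection between them is forced to be a bijection. Consequently the theorem reduces to surjectivity of $\phi$, and uniqueness of $a$ comes for free.

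For surjectivity I would induct on $n(x)$. The base case $x = \mathbf{0}$ is handled by $a = \mathbf{0}$. For $n(x) > 0$, if one can produce $j \in [d]$ and $y \in X$ with $g_j(y) = x$, then $n(y) = n(x) - 1$, and the inductive hypothesis yields $a' \in \Gamma$ with $y = a' \cdot \mathbf{0}$, giving $x = (a' + e_j) \cdot \mathbf{0}$. Thus the entire theorem is pinned to the covering statement
\begin{equation*}
X \setminus \set{\mathbf{0}} \subseteq \bigcup_{j=1}^d g_j(X).
\end{equation*}

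The main obstacle is establishing this covering, which requires the explicit definition of the $g_j$ from Section~\ref{sec:definition}. Intuitively, for a given $x \neq \mathbf{0}$ one should identify a coordinate along which the ``last'' spiral shift was performed and invert that single step; I expect this to reduce to a small case analysis keyed to the position of $x$ in the hlex-induced ordering on $\N^d$. If this recovery can be made canonical in $x$, it gives an explicit inverse to $\phi$, which would hand us uniqueness directly and make the cardinality argument optional; otherwise, the cardinality shortcut above still closes the proof cleanly once surjectivity is in hand.
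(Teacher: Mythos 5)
Your reduction of injectivity to surjectivity via the cardinality match on level sets (both $\Gamma_k$ and $X_k$ have $\binom{k+d-1}{d-1}$ elements, since $n(g_j(x)) = n(x)+1$ holds by Lemma~\ref{lem:size}) is clean, correct, and genuinely different from the paper, which proves uniqueness directly by peeling off $a_1, a_2, \dots, a_d$ one at a time. However, you have correctly isolated, but not proved, the covering claim $X \setminus \{\mathbf{0}\} \subseteq \bigcup_{j=1}^d g_j(X)$, and that is where essentially all the content of the theorem lives. The sentence about ``identifying the coordinate along which the last spiral shift was performed'' is an accurate description of what is wanted, but it is a restatement of the problem, not a solution. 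To make it precise you need to characterize $g_j(X)$ for $j \geq 2$: given $x$ with $n_1 = 0$ (so $x \notin g_1(X)$), you must exhibit $j$ and $y$ so that $g_j(y) = x$, and verify that the candidate $y$ (obtained by un-cycling the top $d - j + 1$ seats and decrementing one level) is actually a valid configuration and, crucially, that its lowest $j-1$ points \emph{remain} the lowest $j-1$ points after the un-shift. Choosing the right $j$ and verifying these compatibility conditions is exactly the work the paper does with its $r$-tightness lemma (Lemma~\ref{lem:tight}); there it is used to show that the orbit of $\delta^r$ under $g_r$ sweeps out all admissible positions from $\delta^r(x')$ upward, which both guarantees a solution for $a_r$ and pins down uniqueness.

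So: right skeleton, missing the load-bearing lemma. If you prove the covering claim — which will likely require developing something equivalent to the tightness notion — your route is a valid and arguably slicker alternative to the paper's proof, since the cardinality trick makes the uniqueness half of the argument free. As written, though, the proof is incomplete; the covering claim cannot be dismissed as a small case analysis without actually carrying it out.
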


The reason why the operators $g_j$ play a role in the proof of Theorem \ref{thm:full_sum_intro} is the following formula about how $g_j$ interacts with the $n$- and $W$-statistics. 

\begin{theorem}[Lemma \ref{lem:size} and Lemma \ref{lem:weight}]
For any $x\in X$ and $j\in [d]$, we have
\begin{align}
W(g_j(x))&=W(x)+j-1;\\
n(g_j(x))&=n(x)+1.
\end{align}
\label{lem:weight_intro}
\end{theorem}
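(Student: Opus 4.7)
The plan is to unpack the explicit definition of the spiral shift $g_j$ from Section~\ref{sec:definition} and the definition of the statistic $W$ from Section~\ref{sec:statistics}, and then verify the two identities independently by direct calculation. Judging from the name ``spiral shifting'' and the role $g_j$ plays as a generator of the $\Gamma$-action, I expect $g_j$ to be defined piecewise on $X$: in a ``generic'' case it simply increments a single coordinate of $x$, while in one or more ``wrap'' cases the spiral behavior causes several coordinates to update simultaneously. The two identities are then essentially bookkeeping statements about these piecewise rules.

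For $n(g_j(x)) = n(x)+1$, the strategy is to walk through each branch of the definition of $g_j$ and confirm that the sum $n_1+\cdots+n_d$ increases by exactly one. In the generic branch this is immediate. In the wrap branches, the individual coordinate increments and decrements must cancel to leave a single net unit of increase; I would verify this by writing out the updated tuple explicitly in each case and collecting terms.

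For $W(g_j(x)) = W(x)+j-1$, I would again proceed branch by branch, now tracking each coordinate's contribution to $W$. The generic branch should be arranged so that the coordinate being incremented carries weight $j-1$ in $W$, making the identity transparent. The wrap branches are the main obstacle: several coordinates change simultaneously, and a priori the net change in $W$ could depend on $x$. The key will be to show that the contributions along a spiral step telescope so that the total change is always $j-1$, independent of $x$. The spiral should be engineered precisely so that each wrap step trades an increment in one coordinate for a decrement in an adjacent coordinate whose $W$-weight differs by one unit, so that all the intermediate terms cancel and only a fixed offset of $j-1$ survives. This telescoping will be algorithmic once the piecewise definition is written down explicitly.

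Overall, I anticipate no conceptual obstacle beyond careful bookkeeping of the piecewise cases. The proof of the $n$-identity should be short; the $W$-identity in the wrap branches is where the bulk of the calculation sits, and it is also where the design of the ``spiral'' (as opposed to a naive coordinatewise increment) becomes essential, since a naive operator would fail to preserve the free-transitivity of the $\Gamma$-action established in Theorem~\ref{thm:free_transitive_intro}.
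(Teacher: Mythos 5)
Your plan for $n(g_j(x))=n(x)+1$ is sound in spirit: you would check that the spiral shift net-increases the sum of coordinates by one. The paper's version is slightly cleaner — it observes that among the $d-j+1$ points being shifted, exactly one (the one at the rightmost occupied seat) wraps around and has its level incremented by $1$, while all other points keep their levels — but either way this part is routine.

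The $W$-identity is where your proposal has a real gap. You are modeling $g_j$ as a carry-style positional increment ("generic" branch plus "wrap" branch where one coordinate trades an increment for an adjacent decrement), and you are modeling $W$ as if it were a weighted linear sum of coordinates ("the coordinate being incremented carries weight $j-1$"). Neither matches the paper's definitions. In fact $g_j$ simultaneously rotates the top $d-j+1$ points through the cycle of available seats, so in general several coordinates change at once in a way that is not a local carry; and $W(x)=\sum_{j<k}\lfloor h(\delta^k)-h(\delta^j)\rfloor$ is a sum over \emph{pairs} of points, not a linear functional of $(n_1,\dots,n_d)$. Without the pairwise structure, your telescoping heuristic has no place to attach, and the claim that the net change is always $j-1$ would remain unverified. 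The paper's argument instead splits $W$ into three parts — pair-distances within the fixed low set $L=\{\delta^1,\dots,\delta^{j-1}\}$, within the shifted high set $H=\{\delta^j,\dots,\delta^d\}$, and across $L$ and $H$ — shows the first two are invariant (the $H$--$H$ invariance reduces, after reindexing away the seats of $L$, to the $j=1$ case where $g_1$ adds $1/d$ to every height), and then shows the cross-distances increase by exactly $|L|=j-1$ by the observation that $b(\alpha,\beta)$ increments precisely when $\beta$'s seat passes $\alpha$'s seat, and each low seat is passed exactly once as the high points rotate. That three-part decomposition and the pass-through counting are the missing ideas; a branch-by-branch coordinate calculation would likely rediscover them after considerable effort, but as stated the proposal does not contain them.
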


The three properties above give a bijective proof of the equinumerosity statement in Theorem~\ref{thm:full_sum_intro}. We present the proof as it is instrumental in illustrating the main combinatorial idea of this paper.

\begin{corollary}
There is a bijection from the set of size-$W$ partitions with at most $n$ parts, each of which with size at most $d-1$, to the set of elements $x\in X$ with $n(x)=n$ and $W(x)=W$: 
\begin{equation}
[1]^{m_1}\dots [d-1]^{m_{d-1}} \mapsto g_1^{n-\sum_{i=1}^{d-1} m_i} g_2^{m_1} \dots g_d^{m_{d-1}}(\mathbf{0}).
\end{equation}
\end{corollary}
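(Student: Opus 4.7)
The plan is to reduce the statement directly to the free-transitive action (Theorem \ref{thm:free_transitive_intro}) combined with the additive behavior of the $n$- and $W$-statistics under the $g_j$'s (Theorem \ref{lem:weight_intro}). Essentially, the substitution $a_1=n-\sum_{i=1}^{d-1} m_i$, $a_{j+1}=m_j$ for $j=1,\dots,d-1$ translates the proposed map on partitions into the canonical $\Gamma$-action map $a\mapsto a\cdot\mathbf 0$, and all conditions become linear constraints on the $a_j$'s.

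First I would set up notation: a partition of size $W$ with at most $n$ parts of size $\leq d-1$ can be encoded as a tuple $(m_1,\dots,m_{d-1})\in\N^{d-1}$ with $\sum_{i=1}^{d-1} i\,m_i=W$ and $\sum_{i=1}^{d-1} m_i\leq n$. Send such a tuple to $a=(a_1,\dots,a_d)\in\Gamma=\N^d$ where $a_1:=n-\sum_{i=1}^{d-1} m_i$ and $a_{j+1}:=m_j$. The nonnegativity of the $m_i$'s and the constraint $\sum m_i\leq n$ are equivalent to $a\in\N^d$, and conversely any $a\in\N^d$ arises in this way from some choice of $(m_i)$ together with a value of $n\geq \sum_{i=1}^{d-1} m_i$. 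Hence this encoding is a bijection between the partition set and the set $\{a\in\Gamma:\sum_j a_j=n\}$.

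Next I would compute $n$ and $W$ along the orbit. Iterating Theorem \ref{lem:weight_intro} gives
\begin{align}
n(a\cdot\mathbf 0) &= \sum_{j=1}^{d} a_j,\\
W(a\cdot\mathbf 0) &= \sum_{j=1}^{d} (j-1)\,a_j.
\end{align}
Substituting our encoding, the first equation becomes $a_1+\sum_{j=1}^{d-1} m_j = n$, which holds by the definition of $a_1$. The second becomes $\sum_{j=1}^{d-1} j\,m_j = W$, which is precisely the size of the partition. Thus the composite map sends the partition set into $\{x\in X:n(x)=n,\ W(x)=W\}$.

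Finally I would invoke Theorem \ref{thm:free_transitive_intro}, which says $a\mapsto a\cdot\mathbf 0$ is a bijection $\Gamma\to X$. Restricting to the subset $\{a:\sum a_j=n\}$ gives a bijection onto $\{x\in X:n(x)=n\}$, and by the $W$-formula above this restricts further to a bijection onto $\{x\in X:n(x)=n,\ W(x)=W\}$. Composing with the partition-encoding bijection of the first paragraph yields exactly the map stated in the corollary. There is no real obstacle here; the only thing to be careful about is bookkeeping the index shift $a_{j+1}=m_j$ correctly so that the exponents $\sum(j-1)a_j$ match $\sum i\,m_i$.
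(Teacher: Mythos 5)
Your proposal is correct and takes essentially the same route as the paper: compose the substitution $(m_1,\dots,m_{d-1})\leftrightarrow a=(n-\sum m_i,\,m_1,\dots,m_{d-1})$ with the free-transitive map $a\mapsto a\cdot\mathbf 0$ from Theorem~\ref{thm:free_transitive_intro}, and track $n$ and $W$ via Theorem~\ref{lem:weight_intro}. One small exposition slip: in your second paragraph you assert a bijection with $\{a\in\Gamma:\sum_j a_j=n\}$ without yet imposing the $W$-constraint $\sum_j(j-1)a_j=W$; this is harmless since you impose it in the final paragraph, but stated as written the claim about ``the partition set'' is off.
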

\begin{proof}
The map is well-defined (i.e., landing in the target) due to Theorem \ref{lem:weight_intro}. Its injectivity follows from the uniqueness part of Theorem \ref{thm:free_transitive_intro}. To show its surjectivity, assume $x\in X$ is such that $n(x)=n$ and $W(x)=W$. By the existence part of Theorem \ref{thm:free_transitive_intro}, we can write $x=g_1^{m_0}\dots g_d^{m_{d-1}}$ for some $m_0,\dots,m_{d-1}\geq 0$. It follows from Theorem \ref{lem:weight_intro} that $\sum_{i=0}^{d-1} m_i=n$ and $\sum_{i=1}^{d-1} im_i = W$, which implies that $[1]^{m_1}\dots [d-1]^{m_{d-1}}$ is in the source and is mapped to $x$. 
\end{proof}

Theorem \ref{thm:full_sum_intro} can be further generalized in terms of the semigroup action \eqref{eq:def_action_intro} defined by the operators $g_j$.

\begin{theorem}[Theorem \ref{thm:free_semigroup_orbit} and Corollary \ref{cor:semigroup_orbit}] \label{thm:semigroup_orbit_intro}
Let $\Gamma'$ be a finitely generated subsemigroup of $\Gamma$, and let $x_0$ be an element of $X$. Then $\sum_{x\in \Gamma'\cdot x_0} t^{n(x)}q^{W(x)}$ is a rational function in $t$ and $q$. Moreover, if $\Gamma'$ is free, then there is an explicit product formula in terms of any basis of $\Gamma'$. 
\end{theorem}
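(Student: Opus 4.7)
The plan is to reduce the orbit sum to a Hilbert-series-type generating function on the semigroup $\Gamma'$ itself, using the freeness of the $\Gamma$-action and the additivity of the $n$- and $W$-statistics under the operators $g_j$. First, Theorem~\ref{thm:free_transitive_intro} (applied to $\Gamma$) implies that the orbit map $\Gamma' \to \Gamma' \cdot x_0$, $a \mapsto a \cdot x_0$, is a bijection. Iterating Theorem~\ref{lem:weight_intro} shows that for any $a = (a_1,\dots,a_d) \in \Gamma$,
\begin{equation}
n(a \cdot x_0) = n(x_0) + \sum_{i=1}^{d} a_i, \qquad W(a \cdot x_0) = W(x_0) + \sum_{i=1}^{d}(i-1)\, a_i.
\end{equation}
Writing $|a| := \sum_i a_i$ and $w(a) := \sum_i (i-1) a_i$, the orbit sum becomes
\begin{equation}
\sum_{x \in \Gamma' \cdot x_0} t^{n(x)} q^{W(x)} = t^{n(x_0)} q^{W(x_0)} \sum_{a \in \Gamma'} t^{|a|} q^{w(a)},
\end{equation}
so everything reduces to understanding the multigraded Hilbert series $\sum_{a \in \Gamma'} z^a$ of $\Gamma'$ under the specialization $z_i \mapsto t q^{i-1}$.

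For the free case, if $b^{(1)}, \ldots, b^{(k)}$ is a basis of $\Gamma'$, the unique decomposition $a = \sum_r c_r b^{(r)}$ with $c_r \in \N$ factors the sum into a product of geometric series, yielding the explicit product formula
\begin{equation}
\sum_{x \in \Gamma' \cdot x_0} t^{n(x)} q^{W(x)} = t^{n(x_0)} q^{W(x_0)} \prod_{r=1}^{k} \frac{1}{1 - t^{|b^{(r)}|} q^{w(b^{(r)})}}.
\end{equation}
Note that specializing $\Gamma' = \Gamma$ and $x_0 = \mathbf{0}$ recovers Theorem~\ref{thm:full_sum_intro}, providing a useful sanity check.

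For a general finitely generated $\Gamma' \subseteq \N^d$, the approach is to invoke the classical fact that the multigraded Hilbert series of any such semigroup is a rational function in $z_1, \ldots, z_d$; then specializing $z_i = t q^{i-1}$ yields rationality in $t$ and $q$. Concretely, one triangulates the rational cone $\R_{\geq 0}\Gamma'$ into simplicial cones, applies Gordan's lemma, and thereby writes $\Gamma'$ as a finite disjoint union of translates $u + \Gamma''$ of free subsemigroups $\Gamma'' \subseteq \Gamma'$; the free case then handles each piece, and the results are summed.

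The main obstacle is precisely this last decomposition step: it is standard in affine-semigroup theory but technical to derive from scratch, so the cleanest route is to cite a reference (e.g., Stanley's treatment of Hilbert series of affine semigroups) rather than reprove it here. By contrast, the free case is entirely self-contained and follows immediately from the linearity of $n$ and $w$ on $\Gamma$ combined with the freeness of the action.
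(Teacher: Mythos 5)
Your free case is the paper's Theorem \ref{thm:free_semigroup_orbit} essentially verbatim: reduce to a sum over $\Gamma'$ via freeness of the action and the additivity of $n$ and $W$ from Lemma \ref{lem:size} and Lemma \ref{lem:weight}, then factor into geometric series over a basis. (One minor point: the injectivity of $a \mapsto a \cdot x_0$ for arbitrary $x_0$ is the content of the paper's Corollary \ref{cor:free_anywhere}, which deduces it from Theorem \ref{thm:free_transitive} via cancellativity of $\N^d$; citing only the free transitivity at the origin leaves that small step implicit.)

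For the general finitely generated case, your primary route---invoke rationality of the multigraded Hilbert series $\sum_{a \in \Gamma'} z^a$ of the affine semigroup ring $k[\Gamma']$ (Hilbert--Serre applied to $k[\Gamma']$ as a finitely generated quotient of a polynomial ring graded by $\deg y_i = b^{(i)}$), then specialize $z_i \mapsto tq^{i-1}$---is valid and genuinely different from the paper's. The paper instead cites Ito's theorem that every subsemigroup of $\N^d$ is a finite disjoint union of cosets $a^i + \Gamma^i$ of free subsemigroups, and then sums the free-case formula over the pieces; this keeps the argument elementary and combinatorial, avoids commutative algebra entirely, and produces an explicit expression for the rational function rather than bare rationality. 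However, your ``concretely'' sketch---triangulate $\R_{\geq 0}\Gamma'$ and apply Gordan's lemma---does not on its own produce the coset decomposition of $\Gamma'$: it yields a decomposition of the \emph{saturated} semigroup $\mathrm{cone}(\Gamma') \cap \Z^d$, whereas a finitely generated $\Gamma'$ need not be saturated (e.g.\ $\langle 2,3\rangle \subset \N$, or $\langle (1,0),(1,2)\rangle \subset \N^2$), so the lattice points in the simplicial pieces overshoot $\Gamma'$. The decomposition you want for arbitrary finitely generated $\Gamma'$ is precisely Ito's theorem (equivalently, the Eilenberg--Sch\"utzenberger structure theorem for rational subsets of commutative monoids), not a corollary of cone triangulation, so that paragraph should be dropped or replaced by the Ito citation; your Hilbert-series route sidesteps the issue and is the safer of your two suggestions as written.
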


When $\Gamma'=\Gamma$ and $x_0=\mathbf{0}$, this result recovers Theorem \ref{thm:full_sum_intro}.

Our final result states that a refinement of the generating series in Theorem \ref{thm:full_sum_intro} also has rationality, which implies a theorem of Petrogradsky \cite{petrogradsky2007multiple}.

\begin{theorem}[Corollary \ref{cor:refined_sum} and Equation \eqref{eq:refined_sum_permutation}]
For $x=(n_1,\dots,n_d)$, let $\lambda_1(x)\geq \dots\geq\lambda_d(x)$ denote the sorting of $n_1,\dots,n_d$ in descending order. Then as power series in $q,t_1,\dots,t_d$, we have
\begin{equation}
\sum_{x\in X} q^{W(x)} t_1^{\lambda_1(x)}\dots t_d^{\lambda_d(x)}  
=\frac{\sum_{\pi\in S_d} \parens*{q^{-\inv(\pi)} \prod_{j\in D(\pi)} z_{j}}}{\prod_{j=1}^d (1-z_j)},
\end{equation}
where $z_j=q^{j(d-j)}t_1t_2\dots t_j$, $\inv(\pi)$ is the number of inversions of a permutation $\pi$, and $D(\pi)=\set{i\in [d-1]:\pi(i)>\pi(i+1)}$ is the set of descents of $\pi$.
\end{theorem}

\subsection{Further discussions}
Solomon's formula \eqref{eq:solomon_dedekind} can be put into the context of the moduli space of framed modules\footnote{See also \cite{henniguimaraes2021, nakajima1999hilbert} for the relation to framed quiver varieties.} and the punctual Quot scheme. Indeed, let $R$ be the coordinate ring of an affine variety over $\Fq$, and let $Y=\Spec R$. By passing to the quotient map, classifying finite-index submodules of $R^d$ amounts to classifying ``finite $d$-framed $R$-modules'', i.e., a finite $R$-module $N$ together with an $R$-linear surjection $R^d\onto N$ (``$d$-framing''). The moduli space of the latter classification problem is the Quot scheme of points parametrizing 0-dimensional quotients of $\O_Y^d$ \cite{grothendieck1995hilbert}. When $d=1$, the above Quot scheme is the Hilbert scheme of points on $Y$.

Solomon's formula concerns the enumerative (or motivic) aspect of the geometric objects above. If $Y=\Spec R$ is an affine smooth curve over $\Fq$, then \eqref{eq:solomon_dedekind} for $R$ follows from an analogous formula that computes of the motive of the punctual Quot scheme in the Grothendieck ring \cite{monavariricolfi2022}; the motive in the Grothendieck ring is a refinement of the point count. The same work \cite{monavariricolfi2022} also generlizes the formula to the nested Quot scheme. See also the work of Bifet \cite{bifet1989} on the punctual Quot scheme on a smooth curve and a generalization by Bagnarol, Fantechi and Perroni \cite{bfp2020motive} to the punctual Quot scheme of quotients of any vector bundle on a smooth curve.

There are a number of generating-series formulas about the moduli space of modules (with or without the framing) in the literature. For framed modules, see \cite{goettsche1990betti} on the Hilbert scheme of smooth surfaces and \cite{brv2020motivic, goettscheshende2014refined, maulikyun2013macdonald} on the Hilbert scheme of singular curves. For the moduli space of unframed modules, see formulas for smooth curves \cite{cohenlenstra1984heuristics}, smooth surfaces \cite{bryanmorrison2015motivic, feitfine1960pairs}, nodal singular curves \cite{huang2023mutually} and the (noncommutative) quantum plane \cite{huang2022quantum}.

In hindsight, some results of this paper can be understood from a higher level using affine Grassmannians. The space of submodules $M$ parametrized by hlex normal forms for a given $x=(n_1,\dots,n_d)$ as in \eqref{eq:hlex_normal_form} can be viewed as a Schubert cell in the affine Grassmannian $\GL_d(\Fq((T)))/\GL_d(\Fq[[T]])$ in the following precise sense:
\begin{equation}
    \set{M \text{ as in \eqref{eq:hlex_normal_form}}} = I_{-}\cdot  \mathrm{diag}(T^{n_1},\dots,T^{n_d}) \cdot \GL_d(\Fq[[T]])/\GL_d(\Fq[[T]]),
\end{equation}
where $I_{-}$ is the opposite Iwahori subgroup consisting of matrices in $\GL_d(\Fq[[T]])$ that are lower triangular modulo $T$; this can be verified by tracing the proof of Proposition~\ref{prop:hlex_recovers_smith}. This perspective satisfyingly explains why the hlex normal form recovers the Smith normal form: any $M$ as in \eqref{eq:hlex_normal_form} lives in the double coset $I_{-} \mathrm{diag}(T^{n_1},\dots,T^{n_d}) \GL_d(\Fq[[T]])$. The datum $(n_1,\dots,n_d)$ thus corresponds to an affine permutation and the $W$-statistics, which is the dimension of the corresponding Schubert cell, is the length of this affine permutation. From this perspective, our paper gives an explicit and computation-oriented description, via Gr\"obner basis and the spiral shifting operators, of the algebra and combinatorics of the Schubert cell decomposition of the affine Grassmannian.

The results of this paper easily extend to any DVR $(R,\pi,\kappa)$ in place of $\Fq[[T]]$, except the set of the form $\Span_{\Fq}\set{T^a,\dots,T^b}$ that shows up in \eqref{eq:hlex_normal_requirement} needs to be replaced by
\begin{equation}
    \pi^a K+\dots+\pi^bK,
\end{equation}
where $K$ is the image of any set-theoretical section of the quotient map $R\to \kappa$. (For example, if $(R,\pi,\kappa)=(\Zp,p,\Fp)$, then we may take $K=\set{0,\dots,p-1}$.) In brief, we may use \eqref{eq:hlex_normal_form} to parametrize all sublattices of $\Z_p^d$, as an alternative to Hermite normal form. Hermite parametrization is an important tool to count sublattices with additional requirements, such as forming a subring of $\Z_p^{\times d}$; see \cite{ckk2017cotype}. It is worth asking how the hlex normal form performs in these tasks.

\subsection{Organization}
In Section \ref{sec:groebner}, we give some preliminaries of Gr\"obner bases and describe our classification of finite-index submodules of $\Fq[[T]]^d$. In Section \ref{sec:definition}, we define the operators $g_j$. In Section \ref{sec:properties}, we prove several properties of $g_j$. In Section \ref{sec:statistics}, we define the $n$- and $W$-statistics and prove summation formulas about them using the operators $g_j$. Sections \ref{sec:definition}, \ref{sec:properties} and \ref{sec:statistics} are independent of Section \ref{sec:groebner}. In Section \ref{sec:smith}, we discuss the relation between our stratification to the Smith normal form and give an alternative proof of Petrogradsky \cite{petrogradsky2007multiple}.

\subsection*{Notation and conventions}
We fix $d\geq 1$ for the rest of the paper. We denote by $\N$ the set of nonnegative integers, and we use the standard notation $[d]:=\set{1,2,\dots,d}$. For $d\geq 1$, let $X=\N^d$ be the set of $d$-tuples $(n_1,\dots,n_d)$ of nonnegative integers.

\section{Gr\"obner bases}\label{sec:groebner}
Consider the free module $F:=\Fq[[T]]^d$ over $\Fq[[T]]$, and denote by $u_1,\dots,u_d$ the standard basis of $F$. A \textbf{monomial} is an element $T^n u_i$ for some $n\geq 0$ and $1\leq i\leq d$. We recall the monomial order $\phlex$ (denoted simply by $\prec$ from now on)
\begin{equation}
T^a u_i \prec T^b u_j\text{ iff }a<b\text{ or }(a=b\text{ and }i<j).
\end{equation}

To list the lowest few monomials,
\begin{equation}
u_1\prec u_2 \prec \dots \prec u_d \prec tu_1 \prec \dots tu_d \prec t^2 u_1 \prec \dots
\end{equation}

The monomials are indexed by the set $[d]\times \N$, via the identification
\begin{equation}
\mu_{\delta}:=T^n u_i\text{ if }\delta=(i,n)\in [d]\times \N.
\end{equation}

For any nonzero element $f=\sum_{\delta\in [d]\times \N} c_{\delta} \mu_{\delta}$ of $F$, the \textbf{leading term} and the \textbf{leading monomial} of $f$ are defined as
\begin{align}
\LT(f)&:=c_{\delta} \mu_{\delta},\\
\LM(f)&:=\mu_{\delta},
\end{align}
where $\mu_\delta$ is the lowest monomial such that $c_{\delta}$ is nonzero, which exists due to well-orderedness. For a submodule $M$ of $F$, the \textbf{leading submodule} of $M$ is defined as
\begin{equation}
\LT(M):=\set{\LT(f):f\in M}.
\end{equation}

We collect some standard facts about Gr\"obner bases over power series rings (or ``standard bases''), adapted to suit our purpose here; see \cite{becker1990stability} and \cite{hironaka1964resolution}. We use the noation $\pairing{f_1,\dots,f_r}$ to denote the $\Fq[[T]]$-submodule generated by $f_1,\dots,f_r$ in $F$.

\begin{proposition} Let $M$ be a finite-index submodule of $F$. Then
\begin{enumerate}
\item The leading submodule of $M$ can be uniquely expressed in the form
\begin{equation}
\LT(M)=\pairing{T^{n_1}u_1,\dots,T^{n_d}u_d}
\end{equation}
where $n_1,\dots,n_d\geq 0$. Moreover, there is an isomorphism of $\Fq$-vector spaces
\begin{equation}
\frac{F}{M}\cong \frac{F}{\LT(M)}.
\end{equation}

\item There is a unique generating set \tuparens{called the \textbf{reduced Gr\"obner basis} for $M$}
\begin{equation}
G=\set{f_1,\dots,f_d}
\end{equation}
for $M$ such that for every $i\in [d]$, we have
\begin{enumerate}
\item $\LT(f_i)=T^{n_i}u_i$.
\item Every nonleading term of $f_i$ is not divisible by $\LT(f_j)$ for any $j\in [d]$.
\end{enumerate}

\item Every tuple $\set{f_1,\dots,f_d}$ satisfying conditions \tu{(i)} and \tu{(ii)} in \tu{(b)} is the reduced Gr\"obner basis for the submodule $\pairing{f_1,\dots,f_d}$. 
\end{enumerate}
\label{prop:groebner}
\end{proposition}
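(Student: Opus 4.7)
The strategy is to reduce all three parts of the proposition to a finite-dimensional Gr\"obner basis problem by passing modulo a sufficiently large power of $T$. Because $F/M$ is finite over $\Fq$, multiplication by $T$ is nilpotent on $F/M$, so there exists $N$ with $T^N F \subseteq M$. In the quotient $F/T^N F \cong (\Fq[T]/(T^N))^d$ only finitely many monomials live, and $\prec_\hlex$ restricts to a finite total order on them, so the usual division algorithm terminates. This sidesteps the convergence issue that would otherwise arise from $\prec$ having order type $\omega$, and lets standard polynomial Gr\"obner arguments lift back to $F$ routinely.

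For part (a), I would define $n_i$ as the smallest $n \geq 0$ with $T^n u_i \in M$ (finite by the above), giving the inclusion $\pairing{T^{n_1}u_1,\dots,T^{n_d}u_d} \subseteq \LT(M)$ directly. For the reverse inclusion, any $T^m u_i \in \LT(M)$ arises as $\LT(f)$ for some $f \in M$; running the division algorithm modulo $T^N$ to kill all of $f$'s terms except the leading one, using chosen preimages of each $T^{n_j}u_j$, produces a scalar multiple of $T^m u_i$ lying in $M$, forcing $m \geq n_i$. The $\Fq$-isomorphism $F/M \cong F/\LT(M)$ then follows by showing that the standard monomials $\{T^m u_i : m < n_i\}$ form an $\Fq$-basis of both sides: they trivially do for $F/\LT(M)$; linear independence in $F/M$ is immediate because a nontrivial $\Fq$-relation would exhibit a leading monomial outside $\LT(M)$, contradicting what was just shown; and spanning in $F/M$ is the finite division algorithm inside $(\Fq[T]/(T^N))^d$.

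Part (b)'s existence comes from picking any preimages $f_i \in M$ with $\LT(f_i)$ a unit multiple of $T^{n_i}u_i$, scaling to monic, and iteratively reducing each non-leading term $cT^m u_j$ with $m \geq n_j$ by an appropriate multiple of $f_j$ (finite mod $T^N$). Uniqueness: if $\{f_i\}$ and $\{f_i'\}$ both satisfy (i),(ii), the difference $f_i - f_i'$ lies in $M$ but is supported only on standard monomials, so if nonzero its leading monomial would be a standard monomial inside $\LT(M)$, contradicting (a). For part (c), set $M' = \pairing{f_1,\dots,f_d}$ and $J := \pairing{T^{n_1}u_1,\dots,T^{n_d}u_d}$. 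The crucial structural observation is that the leading monomials $T^{n_i}u_i$ occupy pairwise distinct components $u_i$, an ``automatic Buchberger criterion'' from disjoint $u$-supports. Writing an arbitrary nonzero $g = \sum h_i f_i$ with $h_i = c_i T^{v_i} + (\text{higher powers of }T)$, condition (ii) forces $\LT(h_i f_i) = c_i T^{v_i+n_i} u_i$ for each nonzero $h_i$, since no non-leading term of $f_i$ can lower the $T$-degree below $n_i$. Picking $k$ to minimize $T^{v_k+n_k}u_k$ in $\prec$, a case analysis using (ii) and the hlex $u$-index tie-breaker shows that no other $h_j f_j$ contributes to the coefficient of $T^{v_k+n_k}u_k$ or any strictly smaller monomial in $g$, so $\LT(g) = c_k T^{v_k+n_k}u_k \in J$. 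Hence $\LT(M') \subseteq J$; combined with the trivial $\supseteq$ we get equality, finite index of $M'$ follows from (a), and uniqueness in (b) identifies $(f_1,\dots,f_d)$ as the reduced Gr\"obner basis of $M'$.

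The main obstacle I expect is the no-cancellation case analysis in part (c), specifically the tie-breaking situation where several $h_j f_j$ share the minimal $T$-order $v_j+n_j$; there one must simultaneously deploy the hlex rule, the minimality of $k$, and the precise constraints of (ii) (non-leading terms $T^m u_j$ of $f_i$ satisfy $m < n_j$ and strictly exceed $T^{n_i}u_i$ in $\prec$) to rule out a non-leading term of some $f_j$ colluding with the leading coefficient of $h_j$ to cancel the putative leading term of $g$. A secondary technical care is making sure the $T^N$ truncation in (a) and (b) is genuinely consistent, i.e., that the constructed reductions and Gr\"obner representatives depend only on the data mod $T^N$ and lift coherently to $\Fq[[T]]^d$.
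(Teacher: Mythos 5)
Your part (c) argument is essentially sound and matches the paper's key insight: the paper handles (c) by observing that the leading monomials $T^{n_i}u_i$ lie in pairwise distinct components, so there are no S-pairs and the Buchberger criterion is vacuous; your direct no-cancellation computation of $\LT\bigl(\sum h_i f_i\bigr)$ is a worked-out version of the same fact. The paper otherwise treats the whole proposition as a citation of standard Gr\"obner facts (Becker, Hironaka), so the extra detail you supply is welcome.

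However, there is a genuine error in part (a). You define $n_i$ as the smallest $n\ge 0$ with $T^n u_i\in M$, but the proposition's $n_i$ is characterized by $T^{n_i}u_i$ being the minimal monomial in the $u_i$-component of $\LT(M)$, i.e.\ there exists $f\in M$ with $\LT(f)$ a scalar multiple of $T^{n_i}u_i$ --- the monomial itself need not lie in $M$. These two quantities disagree in general. Take $d=2$ and $M=\pairing{u_1+Tu_2,\ T^2u_2}$. Then $Tu_1=T(u_1+Tu_2)-T^2u_2\in M$ but $u_1\notin M$, so your $n_1=1$; yet $\LT(u_1+Tu_2)=u_1$, so the proposition's $n_1=0$ and $\LT(M)=\pairing{u_1,T^2u_2}$. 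Your version would give $\dim_{\Fq}F/\pairing{Tu_1,T^2u_2}=3$, while $\dim_{\Fq}F/M=2$, so the asserted isomorphism $F/M\cong F/\LT(M)$ would fail. The step in your reverse inclusion --- ``running the division algorithm \ldots\ to kill all of $f$'s terms except the leading one \ldots\ produces a scalar multiple of $T^m u_i$ lying in $M$'' --- is also false for the same reason: reduction against $T^{n_1}u_1=Tu_1$ and $T^{n_2}u_2=T^2u_2$ leaves the standard monomial $Tu_2$ of $u_1+Tu_2$ untouched, so the remainder is $u_1+Tu_2$, not $u_1$, and no contradiction is reached. The fix is to define $n_i$ intrinsically via $\LT(M)$: then $\pairing{T^{n_1}u_1,\dots,T^{n_d}u_d}\subseteq\LT(M)$ is immediate because $\LT(M)$ is closed under multiplication by powers of $T$, the reverse inclusion is exactly the minimality of each $n_i$ (an element of $M$ with leading monomial a standard monomial contradicts minimality), and the division algorithm is needed only to show that the standard monomials span $F/M$ --- where it is legitimate, provided you divide by chosen $g_i\in M$ with $\LT(g_i)=T^{n_i}u_i$ rather than by the monomials $T^{n_i}u_i$ themselves. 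This correction also feeds into (b), where your ``preimages'' must be elements of $M$ whose leading term is $T^{n_i}u_i$ in this corrected sense.
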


\begin{remark}
The statement (c) is not obvious; it follows from the Buchberger criterion. A priori, not every tuple $(f_1,\dots,f_d)$ satisfying (i)(ii) is a reduced Gr\"obner basis, because the leading submodule of $\pairing{f_1,\dots,f_d}$ may be bigger than $\pairing{\LT(f_1),\dots,\LT(f_d)}$. The criterion to test whether $(f_1,\dots,f_d)$ is indeed a reduced Gr\"obner basis is the Buchberger criterion. In our case, for each basis element $u_i$, there is only one monomial (namely, $\LT(f_i)$) involving $u_i$, so the Buchberger criterion automatically passes.
\end{remark}

With Proposition \ref{prop:groebner}, we can now classify finite-index submodules of $\Fq[[T]]^d$ by dividing them into strata indexed by $x=(n_1,\dots,n_d)\in X$. Each stratum, denoted by $\Hilb(x)$, is the set of all finite-index submodules $M$ of $F$ such that
\begin{equation}
\LT(M)=\pairing{T^{n_1}u_1,\dots,T^{n_d}u_d}.
\end{equation}

For any $M$ in $\Hilb(x)$, an $\Fq$-basis for $F/\LT(M)$ is
\begin{equation}
\set{T^{a}u_i: 0\leq a<n_i, i\in [d]},
\end{equation}
so by Proposition \ref{prop:groebner}(a), 
\begin{equation}
\dim_{\Fq} F/M=\dim_{\Fq} F/\LT(M)=n_1+\dots+n_d.
\end{equation}

By the conditions (b)(i)(ii) in Proposition \ref{prop:groebner}, a typical element of $\Hilb(x)$ is given by the reduced Gr\"obner basis $(f_1,\dots,f_d)$, where
\begin{equation}\label{eq:reduced_basis}
f_i = T^{n_i}u_i + \text{(linear combination of $T^{a}u_j$ such that $a<n_j$ and $T^{a}u_j\succ T^{n_i}u_i$)}
\end{equation}

Given $i\neq j\in [d]$, we notice that in order for $f_i$ to have a term involving $T^a u_j$ for some $a$, we must have $T^{n_j}u_j \succ T^{n_i} u_i$, namely, $n_j+j/d > n_i + i/d$. Moreover, in this case, the number of monomials of the form $T^a u_j$ that could appear in $f_i$ is
\begin{equation}
\floor{n_j+\frac{j}{d} - n_i - \frac{i}{d}}.
\end{equation}

Since the coefficient of each monomial that could appear in the nonleading terms of $g_i$ can be chosen freely and independently, it follows from the discussion above that the cardinality of $\Hilb(x)$ is given by $q^{W(x)}$, where
\begin{equation}\label{eq:def_weight_first}
W(x)=\sum_{i\neq j\in [d]} \max\set{0,\floor{n_j+\frac{j}{d} - n_i - \frac{i}{d}}}.
\end{equation}

Define $n(x)=n_1+\dots+n_d$ for $x=(n_1,\dots,n_d)\in X$. We have just proved the following combinatorial formula for the left-hand side of \eqref{eq:count_submodule}.

\begin{lemma}
Using the definition of $W(x)$ and $n(x)$ above, we have
\begin{equation}
\sum_{M} t^{\dim_{\Fq} (\Fq[[T]]^d/M)} = \sum_{x\in X} q^{W(x)} t^{n(x)},
\end{equation}
where the sum on the left-hand side extends over all finite-index submodules of $\Fq[[T]]^d$.
\end{lemma}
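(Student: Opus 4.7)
The plan is to prove the identity by stratifying the left-hand side according to the leading submodule of $M$. Proposition~\ref{prop:groebner}(a) already records both pieces of data needed for the right-hand side: it gives a canonical decomposition of the set of finite-index submodules into the strata $\Hilb(x)$ indexed by $x\in X$, and for any $M\in\Hilb(x)$ it yields $\dim_{\Fq}(F/M)=\dim_{\Fq}(F/\LT(M))=n(x)$. Consequently the left-hand side becomes $\sum_{x\in X}|\Hilb(x)|\,t^{n(x)}$, and the lemma reduces to the single equality $|\Hilb(x)|=q^{W(x)}$.

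To count $\Hilb(x)$, I would apply Proposition~\ref{prop:groebner}(b)(c), which identifies this stratum with the set of tuples $(f_1,\dots,f_d)$ satisfying $\LT(f_i)=T^{n_i}u_i$ and such that every non-leading monomial $T^a u_j$ appearing in $f_i$ lies outside $\LT(M)=\pairing{T^{n_1}u_1,\dots,T^{n_d}u_d}$ (equivalently, $a<n_j$) and satisfies $T^{n_i}u_i\prec T^a u_j$. Each permissible triple $(i,j,a)$ contributes one free coefficient in $\Fq$, and distinct triples contribute independent coefficients, so $|\Hilb(x)|$ is a pure power of $q$ whose exponent equals the total number of such monomial positions. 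Using the order-preserving encoding $T^a u_j\leftrightarrow a+j/d$, the count of permissible $a$ for each pair $(i,j)$ with $i\neq j$ evaluates to $\max\{0,\lfloor n_j+j/d-n_i-i/d\rfloor\}$, matching \eqref{eq:def_weight_first} summand by summand.

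I expect the only slightly delicate step to be verifying that the interval count $\#\{a\in\N: n_i+(i-j)/d<a<n_j\}$ agrees with the claimed floor expression. Since $(j-i)/d$ is a non-integer rational in $(-1,1)$ whenever $i\neq j$, this is a short case split on the sign of $j-i$ (and, in the edge cases, on whether $n_j>n_i$), which I would carry out directly. With this count established, summing $q^{W(x)}t^{n(x)}$ over $x\in X$ yields the lemma.
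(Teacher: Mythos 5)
Your proof takes the same route as the paper: stratify by the leading submodule $\LT(M)$ using Proposition~\ref{prop:groebner}, use part (a) to get $\dim_{\Fq}(F/M)=n(x)$, identify $\Hilb(x)$ with the set of reduced Gr\"obner bases via parts (b)(c), and count the free coefficients to conclude $|\Hilb(x)|=q^{W(x)}$. Your version is slightly more careful in one respect: you correctly write the height encoding as $a+j/d$ and flag the case split needed to verify the floor formula, whereas the paper contains a typo ($n_j+j/n$ instead of $n_j+j/d$) and elides that verification; your intended case split on the sign of $j-i$ (using that $(j-i)/d$ is a non-integer in $(-1,1)$) does indeed confirm the count.
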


Together with Theorem \ref{thm:full_sum_intro}, the formula \eqref{eq:count_submodule} follows. 

\section{Spiral shifting operators}
\label{sec:definition}
Recall that $X=\N^d$ is the set of $d$-tuples $(n_1,\dots,n_d)$ of nonnegative integers, and the goal of this section is to define the operators $g_1,\dots,g_d: X\to X$. The operator $g_1$ is defined by
\begin{equation}\label{eq:g1_def}
 g_1(n_1,\dots,n_d)=(n_d+1,n_1,n_2,\dots,n_{d-1}).
\end{equation}

In order to define the other operators $g_2,\dots,g_d$, it is helpful to view an element $x=(n_1,\dots,n_d)\in X$ as a configuration of $d$ points
\begin{equation}
\Delta(x):=\set{(1,n_1),(2,n_2),\dots,(d,n_d)}\subeq [d]\times \N.
\end{equation}

For a point $\delta=(i,n)$ in $[d]\times \N$, we define the \textbf{seat}, the \textbf{level} and the \textbf{height} of $\delta$ to be
\begin{align}
i(\delta)&:=i,\\
n(\delta)&:=n,\\
h(\delta)&:=n+\frac{i}{d}.
\end{align}

We linearly order the set $[d]\times \N$ according to the height:
\begin{equation}
\delta_1 \prec \delta_2 \iff h(\delta_1) < h(\delta_2).
\end{equation}

For any $s\in \N$ and $\delta\in [d]\times \N$, we define 
\begin{equation}\label{eq:def_shift}
\delta + s/d
\end{equation}
to be the unique element of $[d]\times \N$ with height $h(\delta)+s/d$. 

In particular, $(i,n)+1/d=(i+1,n)$ if $1\leq i\leq d-1$, and $(d,n)+1/d=(1,n+1)$. Essentially, we are arranging the points of $[d]\times\N$ on an upright cylinder with a \emph{spiral} tracing the points in the height order, so the operator $+s/d$ means shifting upwards following the spiral by $s$ steps. Note also that $(i,n)+1=(i,n)+d/d=(i,n+1)$, hence the notation. 

Now the operator $g_1$ can be alternatively understood as shifting every point upwards by one step:
\begin{equation}
g_1(x):=x'\text{ such that }\Delta(x')=\set{\delta+1/d:\delta\in \Delta(x)}.
\end{equation}

To define the operators $g_2,\dots,g_d$, we first list the points of $\Delta(x)$ in the height order:
\begin{equation}
\Delta(x)=\set{\delta^1(x)\prec \delta^2(x)\prec \dots \prec \delta^d(x)}.
\end{equation}

As an informal definition of $g_j$, the operator $g_j$ keeps $\delta^1(x),\dots,\delta^{j-1}(x)$ fixed, and simultaneously shifts $\delta^j(x),\allowbreak\dots,\delta^d(x)$ upwards to the next available seat (namely, a seat not occupied by the fixed points). More formally, define $g_j(x)=x'$ such that $\Delta(x')=\set{\delta^k(x'): k\in [d]}$, where
\begin{equation}\label{eq:def_gj}
\delta^k(x')=
\begin{cases}
\delta^k(x), & k<j; \\
\delta^k(x)+s_k/d, & k\geq j,
\end{cases}
\end{equation}
where $s_k$ is the minimal positive integer such that
\begin{equation}
i(\delta^k + s_k/d) \in \set{i(\delta^j(x)),\dots,i(\delta^d(x))}.
\end{equation}

Implicit in the definition is an important observation that $g_j$ preserves the height ranking among the points, i.e., we do have $\delta^1(x')\prec \dots \prec \delta^d(x')$ if $\delta^k(x')$ is given by definition \eqref{eq:def_gj}. Indeed, when going from $x$ to $x'$, the lowest $j-1$ points are fixed, and the remaining points are raised in a way that preserves the relative order within. 

Note the subtlety that the definition of $g_j$ involves an interplay between the height-rankings and the seats of the points.

\begin{center}
\begin{figure}[h]
\centering
\begin{subfigure}[b]{0.48\textwidth}
\begin{tikzpicture}
\node (10) at (0,0) {$\cdot$};
\node [right of=10] (20) {$\cdot$};
\node [right of=20] (30) {$\cdot$};
\node [right of=30] (40) {$\cdot$};
\node [right of=40] (50) {$\bullet$};
\node [right of=50] (60) {$\cdot$};
\node [right of=60] (70) {$\cdot$};

\node [above of=10] (11) {$\cdot$};
\node [above of=20] (21) {$\cdot$};
\node [above of=30] (31) {$\bullet$};
\node [above of=40] (41) {$\cdot$};
\node [above of=50] (51) {$\cdot$};
\node [above of=60] (61) {$\cdot$};
\node [above of=70] (71) {$\bullet$};

\node [above of=11] (12) {$\bullet$};
\node [above of=21] (22) {$\cdot$};
\node [above of=31] (32) {$\cdot$};
\node [circle,draw=black] [above of=41] (42) {$\bullet$};
\node [above of=51] (52) {$\cdot$};
\node [above of=61] (62) {$\circ$};
\node [above of=71] (72) {$\cdot$};

\node [above of=12] (13) {$\cdot$};
\node [circle,draw=black] [above of=22] (23) {$\bullet$};
\node [above of=32] (33) {$\cdot$};
\node [above of=42] (43) {$\circ$};
\node [above of=52] (53) {$\cdot$};
\node [circle,draw=black] [above of=62] (63)  {$\bullet$};
\node [above of=72] (73) {$\cdot$};

\node [above of=13] (14) {$\cdot$};
\node [above of=23] (24) {$\circ$};
\node [above of=33] (34) {$\cdot$};
\node [above of=43] (44) {$\cdot$};
\node [above of=53] (54) {$\cdot$};
\node [above of=63] (64) {$\cdot$};
\node [above of=73] (74) {$\cdot$};

\node [right of=74] (84) {};
\node [left of=13] (03) {};

\node [left of=10, node distance=3ex] (y0) {$0$};
\node [left of=11, node distance=3ex] (y1) {$1$};
\node [left of=12, node distance=3ex] (y2) {$2$};
\node [left of=13, node distance=3ex] (y3) {$3$};
\node [left of=14, node distance=3ex] (y4) {$4$};

\node [below of=10, node distance=3ex] (x1) {$1$};
\node [below of=20, node distance=3ex] (x2) {$2$};
\node [below of=30, node distance=3ex] (x3) {$3$};
\node [below of=40, node distance=3ex] (x4) {$4$};
\node [below of=50, node distance=3ex] (x5) {$5$};
\node [below of=60, node distance=3ex] (x6) {$6$};
\node [below of=70, node distance=3ex] (x7) {$7$};

\draw [-to] (23) -- (43);
\draw [-to] (42) -- (62);
\draw [-to, shorten >= 5ex] (63) -- (84);
\draw [-to, shorten <= 5ex] (03) -- (24);
\path[->] (12) edge [loop above] (12);
\path[->] (31) edge [loop above] (31);
\path[->] (50) edge [loop above] (50);
\path[->] (71) edge [loop above] (71);

\draw [-, shorten <= -2ex, shorten >= -2ex] (20) -- (24);
\draw [-, shorten <= -2ex, shorten >= -2ex] (40) -- (44);
\draw [-, shorten <= -2ex, shorten >= -2ex] (60) -- (64);
\end{tikzpicture}
\end{subfigure}
\begin{subfigure}[b]{0.2\textwidth}
$\bullet$ unmoved

$\odot$ moved from

$\circ$ moved to

$|$ available seats
\end{subfigure}
\caption{$g_5(2,3,1,2,0,3,1)=(2,4,1,3,0,2,1)$}
\label{fig:spiral}
\end{figure}
\end{center}

\begin{example}\label{eg:intro}
We use an example with $d=7, x=(n_1,\dots,n_7)=(2,3,1,2,0,3,1)\in X=\N^d$ and $j=5$ to illustrate the definition of $g_j(x)$, see Figure \ref{fig:spiral}. Here, the points of $x$ are plotted in solid dots. Then, since $j=5$, the lowest $j-1=4$ points in the spiral order are fixed by $g_5$. Note that when there is a tie on the levels, the point at the left is considered lower, so $(1,2)$ rather than $(4,2)$ are fixed. The rest of the points are shifted to the right (skipping the occupied seats), and their destinations are indicated by the hollow dots. In addition, the shift must follow the spiral order: $(6,3)$ is shifted to $(2,4)$, with level increase by $1$. Finally, reading the new configuration consisting of fixed points and hollow dots, we get $g_5(x)=(2,4,1,3,0,2,1)$.

As an additional example, we leave it to the readers to verify the following commutative diagram:
\begin{equation}
\begin{tikzcd}
x=(0,2,1,0,1) \arrow[mapsto]{r}{g_3}\arrow[mapsto]{d}{g_2} & (0,2,2,0,1)\arrow[mapsto]{d}{g_2}\\
(0,2,2,1,0) \arrow[mapsto]{r}{g_3} & (0,2,2,2,0)
\end{tikzcd}
\end{equation}
\end{example}

\section{Properties of the operators}\label{sec:properties}
The first property of the operators $g_1,\dots,g_d$ is that they commute. 

\begin{theorem}\label{thm:commute}
For all $x\in X$ and $j'<j$, we have
\begin{equation}
g_{j'}(g_j(x))=g_j(g_{j'}(x)).
\end{equation}
\end{theorem}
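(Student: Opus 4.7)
The plan is to re-encode elements of $X$ via the spiral bijection and then check commutativity code-by-code. Identify each point $(i,n) \in [d]\times\N$ with its \emph{code} $k := nd + i \in \Z_{\geq 1}$; then $x \in X$ corresponds to a $d$-subset of $\Z_{\geq 1}$ whose elements lie in pairwise distinct residue classes mod $d$, and writing the codes in increasing order as $k_1 < k_2 < \cdots < k_d$ reproduces the height ranking $\delta^1(x) \prec \cdots \prec \delta^d(x)$. For $J \in [d]$, set $A_J(x) = \{i(\delta^l(x)) : J \leq l \leq d\}$ and $T_J(x) = \{k \in \Z_{\geq 1} : i(\delta) \in A_J(x)$ where $\delta$ has code $k\}$, and let $\nu_S(k)$ denote the least element of $S$ strictly greater than $k$. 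A direct translation of the cyclic-shift definition shows that $g_J(x)$ has codes
\[
k_1,\ \ldots,\ k_{J-1},\ \nu_{T_J(x)}(k_J),\ \ldots,\ \nu_{T_J(x)}(k_d),
\]
since the cyclic shift on $A_J(x)$ is exactly the successor map in the sub-spiral $T_J(x)$, with the level bump corresponding to a cyclic wrap.

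Next, two structural observations. Because $g_j$ fixes the bottom $j-1$ points and preserves the height ranking, $A_{j'}(g_j(x)) = A_{j'}(x)$ and hence $T_{j'}(g_j(x)) = T_{j'}(x)$ for every $j' \leq j$. For $j' < j$, the same rank-preserving description of $g_{j'}$ gives instead $A_j(g_{j'}(x)) = \tau_{j'}(A_j(x))$, where $\tau_{j'}$ is the cyclic shift on $A_{j'}(x)$; equivalently, $T_j(g_{j'}(x))$ and $T_j(x)$ are two sub-spirals of $T_{j'}(x)$ whose defining residue sets modulo $M := |A_{j'}(x)|$ differ by a shift of $1$.

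Now compare $g_{j'}g_j(x)$ and $g_jg_{j'}(x)$ code by code. For $l < j'$ both compositions leave $k_l$ fixed. For $j' \leq l < j$ both compositions produce $\nu_{T_{j'}(x)}(k_l)$: in the $g_{j'}g_j$ direction this uses $T_{j'}(g_j(x)) = T_{j'}(x)$; in the $g_jg_{j'}$ direction it uses that $g_j$ fixes the rank-$l$ code of $g_{j'}(x)$ since $l < j$. The remaining case $l \geq j$ reduces to the identity
\[
\nu_{T_{j'}(x)} \circ \nu_{T_j(x)}(k_l) \,=\, \nu_{T_j(g_{j'}(x))} \circ \nu_{T_{j'}(x)}(k_l),
\]
which is the heart of the proof.

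To establish this identity, re-index elements of $T_{j'}(x)$ by position, so $\nu_{T_{j'}(x)}$ becomes the successor $r \mapsto r+1$ on $\Z_{\geq 1}$. Under this re-indexing, $T_j(x)$ and $T_j(g_{j'}(x))$ correspond to positions whose residue modulo $M$ lies in $R$ and in $R+1$ respectively, where $R \subset \Z/M\Z$ is determined by $A_j(x)$ (this is the content of the shift $A_j(g_{j'}(x)) = \tau_{j'}(A_j(x))$). The identity then collapses to the elementary statement that if $r''$ is the least integer $> r$ with residue in $R$, then $r''+1$ is the least integer $> r+1$ with residue in $R+1$, which is immediate from the bijection $s \mapsto s+1$ between the two sets of positions. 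The main obstacle is the bookkeeping of the second paragraph, namely tracking how each of $A_j$ and $A_{j'}$ transforms under the other operator; once that is pinned down, the asymmetric-looking compositions collapse to the symmetric residue-shift observation above.
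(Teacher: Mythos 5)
Your proof is correct. The core idea — identifying $(i,n)$ with its spiral code $nd+i$, so that each $g_J$ becomes ``fix the lowest $J-1$ codes, apply the successor map $\nu_{T_J(x)}$ to the rest,'' and then reducing commutativity to a clean arithmetic identity about successors in shifted residue classes — goes through, and I checked the pivotal identity $\nu_{T_{j'}(x)}\circ\nu_{T_j(x)} = \nu_{T_j(g_{j'}(x))}\circ\nu_{T_{j'}(x)}$ on codes of rank $\geq j$: after re-indexing $T_{j'}(x)$ by position, it becomes ``if $r''$ is the least integer $>r$ with residue class in $R$ mod $M$, then $r''+1$ is the least integer $>r+1$ with residue in $R+1$,'' which is immediate.

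The route is genuinely different from the paper's, though the two are conceptually parallel. The paper reduces to $j'=1$ (by deleting the bottom $j'-1$ fixed points), then writes $\Delta(g_j(x))$, $\Delta(g_1(x))$, $\Delta(g_1 g_j(x))$, and $\Delta(g_j g_1(x))$ explicitly in coordinates and verifies equality by cases on whether the largest seat among the shifted points equals $d$. Your proof avoids both the $j'=1$ reduction and the coordinate case analysis: instead you perform two re-encodings (from points to spiral codes, then from codes to positions inside $T_{j'}(x)$), after which $\nu_{T_{j'}}$ becomes the uniform shift $r\mapsto r+1$ and $T_j$ versus $T_j(g_{j'}(x))$ become residue sets $R$ versus $R+1$. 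The role that ``reduce to $j'=1$ and observe $g_1$ is a uniform spiral shift'' plays in the paper is thus played in your argument by the second re-indexing; your version packages the commutation as an abstract successor-and-residue-shift identity, which is arguably more structural and makes the commutation transparent, at the cost of tracking how $A_j$ and $T_j$ transform under the other operator. Both proofs lean on the same crucial fact (stated but not belabored in the paper) that each $g_J$ preserves the height ranking of the points, so that rank-$l$ codes can be compared directly.

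One small presentational note: it would help to record explicitly that $T_j(x)\subseteq T_{j'}(x)$ and that all codes appearing in the rank-$\geq j$ comparison lie in $T_{j'}(x)$, so the position re-indexing is legitimate throughout; this is true and easy, but it is the hypothesis under which the final displayed identity makes sense.
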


\begin{proof}
Because both $g_{j'}$ and $g_j$ leave the lowest $j'-1$ points $\delta^1,\dots,\delta^{j'-1}$ intact, we may remove these points and consider the remaining points only. Effectively, we are assuming $j'=1$ without loss of generality.

Let $k=j-1$ and $l=d-k$. Say the lowest $k$ points of $\Delta(x)$ are
\begin{equation}
\set{\delta^1(x),\dots,\delta^k(x)} = \set{(p_1,m_1),\dots,(p_k,m_k)},
\end{equation}
where $p_1<\dots<p_k$. (That is, we sort these points by their seats.) Let us also name the highest $l$ points of $x$ as
\begin{equation}
\set{\delta^j(x),\dots,\delta^d(x)} = \set{(q_1,n_1),\dots,(q_l,n_l)},
\end{equation}
where $q_1<\dots<q_l$. We shall prove that $g_1(g_j(x))=g_j(g_1(x))$ by describing both sides explicitly. As a convenient convention, we define $(d+1,n)$ to be the point $(1,n+1)$ in $[d]\times \N$.

By definition, we have
\begin{equation}
\Delta(g_j(x)) = \set{(p_1,m_1),\dots,(p_k,m_k) ,(q_1,n_l+1),(q_2,n_1),\dots,(q_l,n_{l-1})}
\end{equation}
and
\begin{equation}
\Delta(g_1(x)) = \set{(p_1+1,m_1),\dots,(p_k+1,m_k),(q_1+1,n_1),\dots,(q_l+1,n_l)},
\end{equation}
where the convention $(d+1,n)=(1,n+1)$ applies to exactly one of points in the list for $\Delta(g_1(x))$. Applying $g_1$ to $\Delta(g_j(x))$, we have
\begin{equation}
\Delta(g_1(g_j(x))) = \set{(p_1+1,m_1),\dots,(p_k+1,m_k), (q_1+1,n_l+1),(q_2+1,n_1),\dots,(q_l+1,n_{l-1})}.
\end{equation}

Now we apply $g_j$ to $\Delta(g_1(x))$. Notice that $g_1$ does not change the height ranking within the points, because it raises the height of every point by exactly $1/d$. Thus $(p_1+1,m_1),\dots,(p_k+1,m_k)$ continue to be the lowest $k$ points of $\Delta(g_1(x))$, so $g_j$ will shift $(q_1+1,n_1),\dots,(q_l+1,n_l)$. We need to consider two cases:
\begin{enumerate}
\item If $q_l\neq d$, then $(q_1+1,n_1),\dots,(q_l+1,n_l)$ are their honest coordinates, so shifting by $g_j$ results in $(q_1+1,n_l+1),(q_2+1,n_1),\dots,(q_l+1,n_{l-1})$. 
\item If $q_l=d$, then the honest coordinates of $(q_1+1,n_1),\dots,(q_l+1,n_l)$ are $(1,n_l+1),(q_1+1,n_1),\dots,(q_{l-1}+1,n_{l-1})$, so shifting by $g_j$ results in $(1,n_{l-1}+1),(q_1+1,n_l+1),\dots,(q_{l-1}+1,n_{l-2})$. But note that $(1,n_{l-1}+1)$ can be represented by $(d+1,n_{l-1}) = (q_l+1,n_{l-1})$ by our convention. 
\end{enumerate}

We see that in both cases, we have
\begin{equation}
\Delta(g_j(g_1(x))) = \set{(p_1+1,m_1),\dots,(p_k+1,m_k),(q_1+1,n_l+1),(q_2+1,n_1),\dots,(q_l+1,n_{l-1})},
\end{equation}
mathcing the expression of $\Delta(g_1(g_j(x)))$. Hence, we have proved the desired equality $g_1(g_j(x))=g_j(g_1(x))$.
\end{proof}

For a multi-index $a=(a_1,a_2,\dots,a_d)$ in $\N^d$, define
\begin{equation}
g^a(x):=g_1^{a_1}g_2^{a_2}\dots g_d^{a_d}(x).
\end{equation}

By commutativity (Theorem \ref{thm:commute}), we have
\begin{equation}
g^{a+b} = g^a \circ g^b \text{ for }a,b\in \N^d.
\end{equation}

In other words, the free abelian semigroup $\Gamma:=\N^d$ acts on the set $X=\N^d$ via
\begin{equation}
a\cdot x = g^a(x).
\end{equation}

The second property of the operators $g_1,\dots,g_d$ is that the semigroup action defined above is free and transitive, in the following sense.

\begin{theorem}\label{thm:free_transitive}
Consider the configuration $\mathbf{0}=(0,\dots,0)$ in $X=\N^d$. Then for any configuration $x\in X$, there exists a unique $a\in \Gamma=\N^d$ such that
\begin{equation}
g^a(\mathbf{0})=x.
\end{equation}

Equivalently, the map $a\mapsto g^a(\mathbf{0})$ is a bijection from $\Gamma=\N^d$ to $X=\N^d$. 
\end{theorem}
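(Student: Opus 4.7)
The approach is to reduce to surjectivity level-by-level and then induct. Since $n(g_j(x)) = n(x) + 1$ for every $j$ (Lemma \ref{lem:size}), iteration gives $n(\Phi(a)) = |a| := a_1 + \dots + a_d$ for the map $\Phi \colon \Gamma \to X$, $\Phi(a) = g^a(\mathbf{0})$. Thus for each $N \geq 0$, $\Phi$ restricts to a map between the finite sets $\set{a \in \Gamma : |a| = N}$ and $\set{x \in X : n(x) = N}$, each of cardinality $\binom{N + d - 1}{d - 1}$ by stars-and-bars. It therefore suffices to prove surjectivity on every level, because pigeonhole then upgrades it to bijectivity, yielding both existence and uniqueness of $a$.

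Surjectivity I would prove by induction on $N = n(x)$, the base $N = 0$ being trivial. For the inductive step, given $x = (n_1, \dots, n_d)$ with $N \geq 1$, I would construct a predecessor $y \in X$ and index $k \in [d]$ with $g_k(y) = x$ and $n(y) = N - 1$; the inductive hypothesis applied to $y$ then yields $b$ with $\Phi(b) = y$, and commutativity (Theorem \ref{thm:commute}) gives $\Phi(b + e_k) = g_k(\Phi(b)) = x$. To build $(y, k)$: let $k$ be the smallest index with $\delta^k(x) \neq (k, 0)$, which exists in $[d]$ since otherwise $x = \mathbf{0}$. By minimality, $\delta^r(x) = (r, 0)$ for $r < k$, hence $n_1 = \dots = n_{k-1} = 0$, and the top $l := d - k + 1$ points of $x$ sit at seats $k, k+1, \dots, d$ with levels $(n_k, n_{k+1}, \dots, n_d)$ in seat order. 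Moreover $n_k \geq 1$: otherwise $(k, 0) \in \Delta(x)$ would have height $k/d$, strictly below the heights $n_i + i/d > k/d$ of every other non-bottom point ($i > k$), forcing $\delta^k(x) = (k, 0)$. I would then set
\begin{equation*}
y := (0, \dots, 0, n_{k+1}, n_{k+2}, \dots, n_d, n_k - 1) \in X.
\end{equation*}

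The main obstacle is verifying $g_k(y) = x$, which is really bookkeeping with the spiral geometry. First, one checks that the bottom $k - 1$ height-lowest points of $y$ are $(1, 0), \dots, (k - 1, 0)$: every other point $(k + r - 1, n_{k + r})$ for $1 \leq r < l$ has height $\geq (k + r - 1)/d > (k - 1)/d$, and the wrap-around $(d, n_k - 1)$ has height $n_k \geq 1 > (k - 1)/d$. Then one invokes the explicit formula isolated in the proof of Theorem \ref{thm:commute}: $g_k$ fixes the bottom $k - 1$ points and sends the seat-ordered top levels $(r_1, \dots, r_l)$ to $(r_l + 1, r_1, \dots, r_{l-1})$. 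Applied to $y$'s top levels $(n_{k+1}, \dots, n_d, n_k - 1)$, this produces $(n_k, n_{k+1}, \dots, n_d)$ at seats $k, \dots, d$, which together with the unchanged bottom is exactly $x$. This closes the inductive step, proving surjectivity; combined with the level-wise cardinality equality, bijectivity (and hence the theorem) follows.
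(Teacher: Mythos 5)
Your proof is correct, but it takes a genuinely different route from the paper's.

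The paper determines the exponent $a$ coordinate-by-coordinate: it sets up $g_d^{a_d}\cdots g_1^{a_1}(\mathbf{0})=x$ and observes that among $g_r,\dots,g_d$, only $g_r$ moves the $r$-th lowest point $\delta^r$, so each $a_r$ is forced in turn. Uniqueness is immediate from this (each application of $g_r$ strictly raises $\delta^r$), while existence (nonnegativity of each $a_r$) requires an invariant, $r$-tightness (Lemma \ref{lem:tight}), to guarantee $\delta^r(x)\succeq\delta^r(x')$ at each stage. Your proof instead establishes only surjectivity of $\Phi\colon a\mapsto g^a(\mathbf{0})$, by inducting on $n(x)$ and explicitly constructing a $g_k$-predecessor $y$ of $x$ for the smallest $k$ with $\delta^k(x)\neq(k,0)$, and then upgrades surjectivity to bijectivity via the pigeonhole principle between the finite level-$N$ slices $\set{a\in\Gamma:|a|=N}$ and $\set{x\in X:n(x)=N}$ (which are literally the same set). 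I verified the bookkeeping: $n_k\geq 1$ as you argued, the bottom $k-1$ height-lowest points of your $y$ are $(1,0),\dots,(k-1,0)$, and the cyclic-shift-with-bump formula extracted from the proof of Theorem \ref{thm:commute} applied to $y$'s top seat-ordered levels $(n_{k+1},\dots,n_d,n_k-1)$ does yield $(n_k,\dots,n_d)$, recovering $x$. What the paper's route buys is an algorithmic description of the inverse map and the identification of the $r$-tightness invariant; what your route buys is dispensing with Lemma \ref{lem:tight} entirely, at the (modest) cost of invoking a finiteness/counting argument to get uniqueness for free.
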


In fact, the correct choice of $a$ can be figured out by playing a puzzle game that is very intuitive with the aid of Figure~\ref{fig:spiral}: start with $y=\mathbf{0}$. Apply $g_1$ the right number of times to $y$ until $\delta^1(y)=\delta^1(x)$. Then apply $g_2$ the right number of times to $y$ until $\delta^2$ is in place. Repeat the process until we are done. We now make a precise formal proof.

\begin{lemma}\label{lem:tight}
For $2\leq r\leq d$, say a configuration $x\in X$ to be \textbf{$r$-tight} if $\delta^r(x)$ is the lowest point in
\begin{equation}
\set{i(\delta^{r}(x)),\dots,i(\delta^d(x))}\times \N
\end{equation}
that is higher than $\delta^{r-1}(x)$. Then for any $1\leq j\leq r-1$, the configuration $g_j(x)$ is $r$-tight whenever $x$ is $r$-tight.
\end{lemma}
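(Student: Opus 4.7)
The plan is to reformulate $g_j$ as ``shift by one'' in the enumeration of $S$-positions on the spiral, where $S:=\set{i(\delta^j(x)),\dots,i(\delta^d(x))}$, so that the $r$-tightness hypothesis becomes transparent. Since $j\leq r-1$, the top-seat set $T:=\set{i(\delta^r(x)),\dots,i(\delta^d(x))}$ is contained in $S$. Unpacking the definition of $g_j$, for each $l\geq j$ the image $\delta^l(g_j(x))$ is the lowest element of $S\times\N$ strictly above $\delta^l(x)$, and height ranking is preserved among the shifted points. Enumerating $S\times\N$ in increasing spiral order as $P_0\prec P_1\prec P_2\prec\cdots$ and writing $\delta^l(x)=P_{a_l}$ for $l\geq j$ (so $a_j<\cdots<a_d$), the operator $g_j$ simply sends $P_{a_l}$ to $P_{a_l+1}$.

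In this language, $x$ is $r$-tight precisely when the open spiral interval $(P_{a_{r-1}},P_{a_r})$ contains no element of $T\times\N$; equivalently, every intermediate $S$-position $P_m$ with $a_{r-1}<m<a_r$ has seat $i(P_m)\notin T$ (vacuously so if $a_r=a_{r-1}+1$). Let $\sigma$ denote the cyclic shift on $S$ characterized by $i(P_{m+1})=\sigma(i(P_m))$. The top-seat set for $g_j(x)$ is then $T'=\set{\sigma(i(P_{a_l})):l\geq r}=\sigma(T)$, and $r$-tightness of $g_j(x)$ amounts to the open interval $(P_{a_{r-1}+1},P_{a_r+1})$ containing no element of $T'\times\N$.

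To verify this, I observe that any point in this new interval either has seat outside $S$---and hence outside $T'\subseteq S$---or is itself an $S$-position $P_m$ with $a_{r-1}+1<m\leq a_r$. For such $P_m$, the conditions $i(P_m)\in T'=\sigma(T)$ and $i(P_{m-1})\in T$ are equivalent, and the latter is ruled out by the $r$-tightness of $x$ since $m-1$ runs over $\set{a_{r-1}+1,\dots,a_r-1}$. The main subtlety is the correct identification $T'=\sigma(T)$, which rests on the height-ranking preservation of $g_j$; once this is in place the verification is pure bookkeeping, and the boundary case $a_r=a_{r-1}+1$ requires no separate treatment because the set of intermediate $S$-positions is then empty.
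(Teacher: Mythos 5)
Your proof is correct and rests on the same essential idea as the paper's own argument: that $g_j$ acts as a uniform shift-by-one on the sub-spiral of $S$-positions, which preserves all relative structure. The paper reaches this more tersely by discarding the $j-1$ fixed points and reindexing to reduce to $j=1$, after which $g_1$ is the global spiral shift; you avoid the reindexing and instead track everything explicitly via the enumeration $P_0\prec P_1\prec\cdots$ of $S\times\N$ and the cyclic shift $\sigma$ it induces on $S$. Your version is more detailed and makes explicit the facts (e.g.\ $T'=\sigma(T)$, the passage from the interval $(P_{a_{r-1}},P_{a_r})$ to $(P_{a_{r-1}+1},P_{a_r+1})$) that the paper leaves implicit in the phrase ``so the $r$-tightness is preserved.''
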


\begin{proof}[Proof of Lemma \tu{\ref{lem:tight}}]
Notice that the lowest $j-1$ points of $x$ are fixed by $g_j$ and are not involved in $\set{i(\delta^{r}(x)),\dots,i(\delta^d(x))}\times \N$. By removing the lowest $j-1$ points of $x$ and reindexing the remaining seats, we may assume $j=1$. Then $g_1$ spiral shifts every point up by one step (in the sense of definition \eqref{eq:def_shift} where $s=1$), so the $r$-tightness is preserved. 
\end{proof}

\begin{proof}[Proof of Theorem \tu{\ref{thm:free_transitive}}]
Because $g_i$'s commute with each other, we may choose to set up the equation $g^a(\mathbf{0})=x$ in the form
\begin{equation}
g_d^{a_d} \dots g_1^{a_1}(\mathbf{0}) = x.
\end{equation}

Look at the lowest point $\delta^1$. Since the only operator that moves $\delta^1$ is $g_1$, we must have
\begin{equation}
\delta^1(g_1^{a_1}(\mathbf{0}) )= \delta^1(x),
\end{equation}
which determines $a_1$ uniquely. In fact, we have
\begin{equation}
a_1/d=h(\delta^1(x)) - h(\delta^1(\mathbf{0})),
\end{equation}
since $g_1$ increases the height of every point by $1/d$.

Having determined $a_1,\dots,a_{r-1}$ ($2\leq r\leq d$), we now determine $a_r$ by looking at the $r$-th lowest point $\delta^r$. Let 
\begin{equation}
x'=g_{r-1}^{a_{r-1}} \dots g_1^{a_1}(\mathbf{0}),
\end{equation}
and consider the equation
\begin{equation}
g_d^{a_d} \dots g_r^{a_r}(x')=x.
\end{equation}

The only operator among $g_d,\dots,g_r$ that moves $\delta^r$ is $g_r$, so we must have
\begin{equation}
\delta^r(g_r^{a_r}(x'))=\delta^r(x).
\end{equation}

It remains to show that there is a unique integer $a_r\geq 0$ satisfying the equation above. The uniqueness is clear because every application of $g_r$ increases the height of $\delta^r$ strictly (see \eqref{eq:def_gj}). It suffices to prove the existence. The key observation is that $\delta^r(x)\succeq \delta^r(x')$. We now prove this observation.

Note that $\mathbf{0}$ is $r$-tight in the sense of Lemma \ref{lem:tight}. Since $x'=g_{r-1}^{a_{r-1}} \dots g_1^{a_1}(\mathbf{0})$, by repetitively applying Lemma \ref{lem:tight}, we have that $x'$ is also $r$-tight. 

By construction, $\delta^{j}(x')=\delta^{j}(x)$ for all $1\leq j\leq r-1$. Thus, the sets $\set{\delta^{r}(x'),\dots,\delta^d(x')}$ and $\set{\delta^{r}(x),\dots,\delta^d(x)}$ are the same (denoted by $K$), because they are both the complement of $\set{\delta^1(x),\dots,\delta^{r-1}(x)}$. Since $x'$ is $r$-tight, the point $\delta^r(x')$ is the lowest point of $K\times \N$ that is higher than $\delta^{r-1}(x')$. Since $\delta^r(x)\succ \delta^{r-1}(x)=\delta^{r-1}(x')$ and the seat of $\delta^r(x)$ also lies in $K$, we have $\delta^r(x)\succeq \delta^r(x')$, proving the desired observation.

Now, we recall from the definition \eqref{eq:def_gj} that for any configuration $y$ such that $\set{\delta^{r}(y),\dots,\allowbreak\delta^d(y)}\allowbreak=K$, the $r$-th lowest point $\delta^r(g_r(y))$ of $g_r(y)$ is defined to be the successor of $\delta^r(y)$ in $K\times \N$. Applying the above fact to $y=x', g_r(x'), g_r^2(x'),\dots$, we see that $\delta^r(g_r^{a_r}(x'))$ ranges over all points of $K\times \N$ that are higher than or equal to $\delta^r(x')$. Together with the observation $\delta^r(x)\succeq \delta^r(x')$ that we have just proved, it follows that there exists $a_r\geq 0$ such that $\delta^r(g_r^{a_r}(x'))=\delta^r(x)$.
\end{proof}

\begin{corollary}\label{cor:free_anywhere}
The group $\Gamma$ acts on any configuration $x\in X$ freely, namely, if $g^a(x)=g^b(x)$, then $a=b$.
\end{corollary}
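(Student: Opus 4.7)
The plan is to reduce the freeness of the $\Gamma$-action at an arbitrary configuration $x$ to the case $x=\mathbf{0}$ that is already handled by Theorem \ref{thm:free_transitive}. The key observation is that Theorem \ref{thm:free_transitive} asserts both existence \emph{and} uniqueness in the equation $g^c(\mathbf{0})=x$, so the map $c\mapsto g^c(\mathbf{0})$ is an honest bijection $\Gamma\to X$, not merely a surjection. This uniqueness is exactly the freeness of the action at the basepoint $\mathbf{0}$, and I will bootstrap it to the freeness at $x$ using the semigroup law $g^{a+b}=g^a\circ g^b$ (Theorem \ref{thm:commute}) together with the fact that $\Gamma=\N^d$ is a cancellative semigroup.

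Concretely, first I would invoke Theorem \ref{thm:free_transitive} to produce the unique $c\in \Gamma$ with $g^c(\mathbf{0})=x$. Next, assuming $g^a(x)=g^b(x)$ for some $a,b\in \Gamma$, I would rewrite both sides as
\begin{equation}
g^{a+c}(\mathbf{0})=g^a(g^c(\mathbf{0}))=g^a(x)=g^b(x)=g^b(g^c(\mathbf{0}))=g^{b+c}(\mathbf{0}),
\end{equation}
where the outer equalities use $g^{a+b}=g^a\circ g^b$. Applying the uniqueness half of Theorem \ref{thm:free_transitive} to $g^{a+c}(\mathbf{0})=g^{b+c}(\mathbf{0})$ yields $a+c=b+c$ in $\Gamma=\N^d$, and coordinatewise cancellation of $c$ gives $a=b$, as desired.

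There is no genuine obstacle here; the corollary is a formal consequence of Theorem \ref{thm:free_transitive} plus the commutativity already established in Theorem \ref{thm:commute}. The only thing to be mindful of is that $\Gamma$ is a semigroup, not a group, so one really does need the cancellation property of $\N^d$ (rather than subtracting $c$ from both sides inside an ambient group). Since $\N^d$ embeds in $\Z^d$, cancellation is immediate and the proof is essentially a three-line deduction.
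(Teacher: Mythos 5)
Your proof is correct and follows exactly the same route as the paper: write $x=g^c(\mathbf{0})$ via the existence half of Theorem \ref{thm:free_transitive}, pass to $g^{a+c}(\mathbf{0})=g^{b+c}(\mathbf{0})$ using the semigroup law, then invoke uniqueness and cancellativity of $\N^d$. Your version even cleans up a small notational slip in the paper's displayed equation, which writes $g^{a+c}(x)=g^{b+c}(x)$ where $\mathbf{0}$ was meant.
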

\begin{proof}
By the existence part of Theorem \ref{thm:free_transitive}, there is $c\in \Gamma$ such that $x=g^c(\mathbf{0})$. Hence
\begin{equation}
g^{a+c}(x)=g^{b+c}(x).
\end{equation}

By the uniqueness part of Theorem \ref{thm:free_transitive}, we have $a+c=b+c$. Since the semigroup $\Gamma=\N^d$ is cancellative, we have $a=b$.
\end{proof}

\section{Some combinatorial statistics}\label{sec:statistics}
For a configuration $x=(n_1,\dots,n_d)$ in $X$, we define its \textbf{size} to be
\begin{equation}
n(x):=n_1+\dots+n_d.
\end{equation}

\begin{lemma}\label{lem:size}
For any $x\in X$ and $j\in [d]$, we have
\begin{equation}
n(g_j(x))=n(x)+1.
\end{equation}
\end{lemma}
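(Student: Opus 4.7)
The plan is to convert the statement into a computation of total heights. For any $x \in X$, the set $\Delta(x) = \set{(1,n_1),\dots,(d,n_d)}$ has seats forming a permutation of $[d]$, so $\sum_{\delta \in \Delta(x)} i(\delta) = d(d+1)/2$ is a constant independent of $x$. Since $h(\delta) = n(\delta) + i(\delta)/d$, this gives $n(x) = \sum_{\delta \in \Delta(x)} h(\delta) - (d+1)/2$, so the change in $n$ under $g_j$ equals the change in total height. By the definition \eqref{eq:def_gj}, the lowest $j-1$ points contribute zero, while for $k \geq j$ the point $\delta^k(x)$ has its height raised by exactly $s_k/d$. Thus
\[ n(g_j(x)) - n(x) = \frac{1}{d}\sum_{k=j}^d s_k, \]
and it remains to prove $\sum_{k=j}^d s_k = d$.

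To evaluate this sum, I analyze the cyclic structure of the shift on the seat set $S := \set{i(\delta^j(x)),\dots,i(\delta^d(x))}$. Since the seats of $\Delta(x)$ form a permutation of $[d]$ and the fixed points have seats in $[d]\setminus S$, the shifted points have seats precisely $S$. Writing $S = \set{i_1 < i_2 < \dots < i_l}$ with $l = d-j+1$, the defining property of $s_k$---the smallest positive integer such that $i(\delta^k(x))+s_k$ reduced into $[d]$ lies in $S$---just amounts to sending each seat $i_m$ to the next element of $S$ in the cyclic order on $[d]$. So the shift amount at the point with seat $i_m$ equals $i_{m+1} - i_m$ for $m < l$ and $(d - i_l) + i_1$ for $m = l$. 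Summing these over $m$ from $1$ to $l$ telescopes to $d$, as desired.

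The main point requiring care is matching each $s_k$ (indexed by height-rank $k$) with the correct $i_m$ (indexed by seat order in $S$), so that the above telescoping is legitimate. This reduces to the observation that the shifted points simply permute $S$, which is implicit in the ``height-ranking preservation'' noted just after \eqref{eq:def_gj}. With this bookkeeping in place the lemma follows; essentially the same cyclic-telescoping analysis should later yield the $W$-version stated in Lemma \ref{lem:weight}, with the extra bookkeeping of level crossings accounting for the $j-1$ term there.
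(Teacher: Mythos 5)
Your proof is correct, but it takes a different route from the paper's. The paper argues directly in terms of levels: the shifted points cyclically permute the seat set $S$, so exactly one of them (the rightmost, seated at $i_l$) wraps around and has its level increased by $1$, while all others land at a new seat \emph{in the same level band} and keep their level. Hence $n$ increases by exactly $1$ with no arithmetic required. You instead pass through total heights, observing that the change in $n$ equals $\frac{1}{d}\sum_{k\geq j} s_k$ and that the shift amounts telescope to one full rotation, $\sum s_k = d$. These are two framings of the same underlying fact (a cyclic permutation of $S$ involves exactly one wraparound), and both are sound; the paper's phrasing is shorter, while yours makes the cyclic-telescoping structure explicit, which is a reasonable warm-up for the more delicate accounting in Lemma \ref{lem:weight} (though the paper's proof of that lemma tracks distances between low and high points rather than summing shift amounts, so the methods again diverge there).

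One minor point: you should state explicitly that the seats $\set{i(\delta^j(x)),\dots,i(\delta^d(x))}$ are distinct and hence form a subset $S$ of $[d]$ of size $l=d-j+1$; this is automatic because $\Delta(x)$ has one point per seat, but it is the fact that makes the ``next seat in $S$ cyclically'' map a well-defined permutation of $S$ and justifies the bijection between the height-rank index $k$ and the seat index $m$. You flag this yourself as ``the main point requiring care,'' and your resolution is correct.
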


\begin{proof}
When we apply $g_j$ to $x$, the rightmost point (i.e., the point with the largest seat) among $\delta^j(x),\dots,\delta^d(x)$ loops around and increases the level by 1, while the levels of other points are unchanged. 
\end{proof}

We now introduce a more interesting statistics. For two points $\delta_1 \prec \delta_2$ in $[d]\times \N$, we define their \textbf{distance} to be
\begin{equation}\label{eq:def_dist}
b(\delta_1,\delta_2):=\floor{h(\delta_2)-h(\delta_1)}.
\end{equation}

For a configuration $x\in X$ consisting of points $\Delta(x)=\set{\delta^1(x)\prec \dots \prec \delta^d(x)}$, we define the \textbf{weight} of $x$ to be
\begin{equation}\label{eq:def_weight}
W(x):=\sum_{j<k} b(\delta^j(x),\delta^k(x)).
\end{equation}

We note that this definition of $W(x)$ is equivalent to \eqref{eq:def_weight_first}. The statistics turns out to interact consistently with the operators $g_j$.

\begin{lemma}\label{lem:weight}
For any $x\in X$ and $j\in [d]$, we have
\begin{equation}
W(g_j(x))=W(x)+j-1.
\end{equation}
\end{lemma}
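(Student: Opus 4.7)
The plan is to encode each point $\delta \in [d] \times \N$ by the nonnegative integer $\psi(\delta) := d \cdot n(\delta) + (i(\delta) - 1)$, which sends the spiral order to the integer order and converts $b(\delta, \delta')$ into $\lfloor (\psi(\delta') - \psi(\delta))/d \rfloor$. Write $\psi_k := \psi(\delta^k(x)) = d q_k + r_k$ with $0 \leq r_k < d$, so $q_k = n(\delta^k(x))$ is the level and $r_k = i(\delta^k(x)) - 1$ is the seat minus one. Since $\psi_{k'} < \psi_k$, the identity
\[
\lfloor (\psi_k - \psi_{k'})/d \rfloor = (q_k - q_{k'}) - [r_{k'} > r_k]
\]
decomposes the weight as $W(x) = S(x) - I(x)$, where $S(x) := \sum_{k' < k}(q_k - q_{k'})$ and $I(x) := |\{(k', k) : k' < k,\ r_{k'} > r_k\}|$ is the inversion count of the seat sequence $(r_1, \ldots, r_d)$.

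Next I would describe the action of $g_j$ on $(q_k, r_k)$. Let $R := \{r_j, \ldots, r_d\}$ with sorted elements $\rho_1 < \cdots < \rho_l$, where $l := d - j + 1$, and let $k^* \in \{j, \ldots, d\}$ be the unique index with $r_{k^*} = \rho_l$. Reading off \eqref{eq:def_gj}, for each $k \geq j$ with $r_k = \rho_p$ the new residue $r_k'$ equals $\rho_{p+1}$ if $p < l$ and $\rho_1$ if $p = l$, while $q_k' = q_k$ except $q_{k^*}' = q_{k^*} + 1$ (the unique wrap-around). The points with $k < j$ are untouched.

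Since only $q_{k^*}$ among the $q$'s changes, and by $+1$, a direct count gives $\Delta S = (k^* - 1) - (d - k^*) = 2k^* - d - 1$. I would then split $\Delta I$ by pair type. Fully fixed pairs contribute $0$. For fully shifted pairs, writing $r_k = \rho_{\sigma(k)}$ for a permutation $\sigma$ of $\{1, \ldots, l\}$, the cyclic shift $\sigma \mapsto \sigma'$ (with $\sigma'(k^*) = 1$ and $\sigma'(k) = \sigma(k) + 1$ elsewhere) changes the number of inversions by $(k^* - j) - (d - k^*) = 2k^* - d - j$: the pairs $(k', k^*)$ with $k' < k^*$ newly become inversions (since $\sigma'(k^*) = 1$ is beaten by every $\sigma'(k') \geq 2$), while the pairs $(k^*, k)$ with $k > k^*$ cease to be.

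The main obstacle is the straddling pairs $(k', k)$ with $k' < j \leq k$, where I must show the total change is $0$. Fix $k' < j$; since $r_{k'} \notin R$, it lies in a unique gap among the complementary residues. For each $k \neq k^*$ in $\{j, \ldots, d\}$, the pair $(k', k)$ flips from inversion to non-inversion precisely when $r_{k'} \in (\rho_{\sigma(k)}, \rho_{\sigma(k)+1})$; summed over $k$, this yields $-1$ if $r_{k'} \in (\rho_1, \rho_l)$ and $0$ otherwise. Separately, the pair $(k', k^*)$ flips from non-inversion to inversion precisely when $r_{k'} \in (\rho_1, \rho_l)$, contributing $+1$ in that case. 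The two contributions cancel for every $k'$, so $\Delta I_{\text{straddle}} = 0$. Combining, $\Delta W = \Delta S - \Delta I = (2k^* - d - 1) - (2k^* - d - j) = j - 1$.
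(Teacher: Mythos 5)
Your proof is correct, and it takes a genuinely different route from the paper. The paper decomposes $W$ directly as a sum of pairwise distances and splits the pairs into low--low, high--high, and low--high; it argues that low--low distances are untouched and (by a reindexing trick that reduces to $j=1$) that high--high distances are also invariant, so the change in $W$ is entirely due to low--high pairs, which is computed by counting how many low seats each high point ``passes through'' during the spiral shift. You instead linearize the spiral via $\psi(\delta)=d\,n(\delta)+(i(\delta)-1)$ and factor $W=S-I$, a sum of level differences minus the inversion count of the seat sequence. Your $\Delta S$ is a one-liner, and the real content moves into $\Delta I$, where the cancellation between the wrap-around point $k^*$ and the other shifted points for each straddling $k'<j$ is the key step (and is correctly argued via the interval partition). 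Both proofs implicitly use that $g_j$ preserves the height ranking so that $\delta^k(g_j(x))$ is the image of $\delta^k(x)$, which the paper states explicitly as an observation. The paper's argument is more geometric and makes the invariance of within-group distances transparent; yours exposes $W$ as a difference of two elementary statistics (a sum and an inversion count), which is a reformulation of independent combinatorial interest and might be useful elsewhere.
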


\begin{proof}
The main idea is intuitive once the reader attempts this lemma as an exercise aided by Figure \ref{fig:spiral}: each of the $j-1$ unmoved particle accounts for the increase of the $W$-statistics by $1$; for each unmoved particle, there is exactly one moved particle that skips its seat, and it is the distance between them that increases by $1$. 

For rigor, we record a precise proof. We call the points in
\begin{equation}
L:=\set{\delta^1(x),\dots,\delta^{j-1}(x)}
\end{equation}
low points, and the points in
\begin{equation}
H:=\set{\delta^j(x),\dots,\delta^d(x)}
\end{equation}
high points. The weight of $x$ consists of three parts: distances within $L$, distances within $H$, and distances from points of $L$ to points of $H$. When applying $g_j$, every distance within $L$ does not change (because every point of $L$ is fixed). We claim that every distance within $H$ does not change as well. To prove the claim, note that when defining distances between two points of $H$, we may ignore the seats of $L$ and reindex the seats of $H$ by $1,\dots,d-j+1$. Thus, we may assume $j=1$ without loss of generality. In this case, $g_1$ increases the height of every point by $1/d$, so the distance is unchanged from definition \eqref{eq:def_dist}.

Therefore, the weight difference $W(g_j(x))-W(x)$ is solely contributed by the distances from low points to high points. Let $\alpha\in L$ and $\beta\in H$. Then we note as an important observation that while $\beta$ is shifted upward following the spiral, the distance $b(\alpha,\beta)$ increases by 1 every time the seat of $\beta$ passes through the seat of $\alpha$. More formally, for any $\alpha\prec \beta\prec \beta'\in [d]\times \N$, we have
\begin{equation}
b(\alpha,\beta')-b(\alpha,\beta) = \#\parens[\bigg]{(\beta,\beta']\cap (\set{i(\alpha)}\times \N)},
\end{equation}
where $(\beta,\beta']\cap (\set{i(\alpha)}\times \N)$ is the set of all points $\delta$ with the same seat as $\alpha$ such that $\beta\prec \delta\preceq \beta'$. 

Sort the seats of high points in ascending order:
\begin{equation}
i(H)=\set{i_1<\dots<i_{d-j+1}}.
\end{equation}

Let $\beta_k$ denote the point of $H$ seated at $i_k$. Let $\beta'_k$ denote the result of $\beta_k$ after a shift by $g_j$. Then the seat of $\beta'_k$ is $i_{k+1}$ if $k<d-j+1$, and $i_1$ if $k=d-j+1$.

Partition the set of seats of points in $L$ by intervals
\begin{equation}
I_1=(i_1,i_2),\dots,I_{d-j}=(i_{d-j},i_{d-j+1}), I_{d-j+1}=(i_{d-j+1},d] \cup [1,i_1).
\end{equation}

Denote by $L_k$ the set of low points seated in $I_k$. Note that upon applying $g_j$, the point $\beta_k$ is ``shifted across'' the interval $I_k$. The discussions so far imply that
\begin{equation}
b(\alpha,\beta'_k)-b(\alpha,\beta_k)=
\begin{cases}
1, & \alpha\in I_k; \\
0, & \alpha\in L\setminus I_k.
\end{cases}
\end{equation}

It follows that
\begin{align}
W(g_j(x))-W(x) &= \sum_{\substack{\alpha\in L\\1\leq k\leq d-j+1}}  b(\alpha,\beta'_k)-b(\alpha,\beta_k) \\
&= \sum_{1\leq k\leq d-j+1} \abs{I_k} \\
&= \abs{L} = j-1,
\end{align}
as desired.
\end{proof}

We are now ready to prove Theorem \ref{thm:full_sum_intro}. For $x\in X$, we define the \textbf{content} of $x$ to be the monomial
\begin{equation}
\Cont(x):=t^{n(x)}q^{W(x)} \in \Z[[t,q]]
\end{equation}
in formal variables $t$ and $q$. 

\begin{theorem}\label{thm:full_sum}
We have an identity of power series in $\Z[[t,q]]$:
\begin{equation}
\sum_{x\in X} \Cont(x) = \frac{1}{(1-t)(1-tq)\dots(1-tq^{d-1})}.
\end{equation}

In particular, the number of configurations $x$ with $n(x)=n$ and $W(x)=W$ equals the number of size-$W$ partitions of length at most $n$ with parts at most $d-1$.
\end{theorem}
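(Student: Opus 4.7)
The plan is to parameterize the sum over $X$ by $\Gamma = \N^d$ using the free-transitive action from Theorem \ref{thm:free_transitive}, then pull the $n$- and $W$-statistics back through $g^a$ using Lemmas \ref{lem:size} and \ref{lem:weight}, at which point the sum factors as a product of geometric series.

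First, by Theorem \ref{thm:free_transitive}, the map $a \mapsto g^a(\mathbf{0})$ is a bijection $\Gamma \to X$, so
\begin{equation}
\sum_{x\in X} \Cont(x) = \sum_{a\in \N^d} \Cont(g^a(\mathbf{0})).
\end{equation}
Next, I apply Lemmas \ref{lem:size} and \ref{lem:weight} inductively. Starting from $n(\mathbf{0}) = 0$ and $W(\mathbf{0}) = 0$, each application of $g_j$ adds $1$ to $n$ and $j-1$ to $W$. Since the operators commute, for $a = (a_1, \dots, a_d) \in \N^d$,
\begin{align}
n(g^a(\mathbf{0})) &= a_1 + a_2 + \dots + a_d, \\
W(g^a(\mathbf{0})) &= 0\cdot a_1 + 1\cdot a_2 + \dots + (d-1)\cdot a_d.
\end{align}
Therefore $\Cont(g^a(\mathbf{0})) = \prod_{j=1}^{d} (tq^{j-1})^{a_j}$, and
\begin{equation}
\sum_{a\in \N^d} \prod_{j=1}^{d}(tq^{j-1})^{a_j} = \prod_{j=1}^{d} \sum_{a_j\geq 0} (tq^{j-1})^{a_j} = \prod_{j=1}^{d} \frac{1}{1-tq^{j-1}},
\end{equation}
which is the claimed identity. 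The interchange of sum and product is valid as an identity in $\Z[[t,q]]$ because every monomial $t^n q^W$ receives contributions from only finitely many $a$.

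For the ``in particular'' statement, I would read off the coefficient of $t^n q^W$ on both sides. On the left it is $\#\set{x\in X : n(x)=n, W(x)=W}$. On the right, expanding the product shows it equals the number of $(a_1,\dots,a_d)\in \N^d$ with $\sum_{j} a_j = n$ and $\sum_{j} (j-1)a_j = W$. Interpreting $a_j$ as the multiplicity of the part $j-1$ gives a partition with $a_1$ zero parts and $\sum_{j\geq 2} a_j \leq n$ positive parts each at most $d-1$; equivalently, a size-$W$ partition with at most $n$ parts (allowing zeros), each part at most $d-1$, matching the target count.

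There is no real obstacle here: all the heavy lifting is done by Theorems \ref{thm:commute} and \ref{thm:free_transitive} together with Lemmas \ref{lem:size} and \ref{lem:weight}, and the only thing to verify beyond invoking them is the inductive formula for $n(g^a(\mathbf{0}))$ and $W(g^a(\mathbf{0}))$, which is immediate from commutativity and the single-step increments.
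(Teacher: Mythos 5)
Your proposal is correct and follows essentially the same route as the paper's proof: reparametrize the sum via the free-transitive action from Theorem \ref{thm:free_transitive}, compute $\Cont(g^a(\mathbf{0}))$ using Lemmas \ref{lem:size} and \ref{lem:weight}, and factor the resulting sum into geometric series. The only cosmetic difference is that you derive the partition interpretation by reading off coefficients directly, whereas the paper cites the box-partition generating-function identity, but these are the same observation.
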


\begin{proof}
By Theorem \ref{thm:free_transitive}, we have
\begin{equation}
\sum_{x\in X} \Cont(x)=\sum_{a\in \Gamma} \Cont(g^a(\mathbf{0})).
\end{equation}

By Lemmas \ref{lem:size} and \ref{lem:weight} and noting that $\Cont(\mathbf{0})=1$, we have
\begin{align}
\sum_{x\in X} \Cont(x) &= \sum_{a_1,\dots,a_d\geq 0} g_1^{a_1}\dots g_d^{a_d}(\mathbf{0}) \\
&= \sum_{a_1,\dots,a_d\geq 0} t^{a_1+\dots+a_d} q^{0 a_1+1 a_2 + \dots + (d-1) a_d} \\
&= \frac{1}{(1-t)(1-tq)\dots(1-tq^{d-1})}.
\end{align}

The final assertion of the theorem follows from the elementary identity
\begin{equation}
\sum_{n=0}^\infty \sum_{\lambda\subeq [d-1]\times [n]} t^n q^{\abs{\lambda}} = \frac{1}{(1-t)(1-tq)\dots(1-tq^{d-1})},
\end{equation}
where the inner sum extends over all partitions of length at most $n$ with parts of size at most $d-1$, and $\abs{\lambda}$ denotes the size of $\lambda$.
\end{proof}


We note the following alternative proof of Theorem \ref{thm:full_sum} that uses only $g_1$.

\begin{proof}[Alternative proof of Theorem \tu{\ref{thm:full_sum}}]
Since we will vary $d$, we write $X$ as $X_d$. Observe from \eqref{eq:g1_def} that the map $g_1:X_d\to X_d$ is injective with image being the complement of
\begin{equation}
0\times X_{d-1} := \set{(0,n_2,\dots,n_d)\in X}.
\end{equation}

For any $x\in X_d$, we note that $n(g_1(x))=n(x)+1$ and $W(g_1(x))=W(x)$, the latter being because $g_1$ preserves the distance $b(\delta^j(x),\delta^k(x))$ for every $j<k$. 

We notice that for $x=(0,x')\in 0\times X_{d-1}$, we have $n(x)=n(x')$ and $W(x)=W(x')+n(x')$, the latter being because the distance from the point $(1,0)$ to $(i,n_i)$ is $n_i$. 

Let $f_d(t,q):=\sum_{x\in X_d} \Cont(x)$. Then we have a recurrence
\begin{align}
f_d(t,q) &= \sum_{x\in X_d} \Cont(g_1(x)) + \sum_{x' \in X_{d-1}} \Cont((0,x')) \\
&= \sum_{x\in X_d} t^{n(x)+1}q^{W(x)} + \sum_{x'\in X_{d-1}} t^{n(x')}q^{W(x')+n(x')}\\
&= tf_d(t,q)+f_{d-1}(tq,q).
\end{align}

Hence
\begin{equation}\label{eq:recurrence}
f_d(t,q)=\dfrac{1}{1-t} f_{d-1}(tq,q).
\end{equation}

Since it is vacuously true that $f_0(t,q)=1$, by applying \eqref{eq:recurrence} repetitvely, we have $f_1(t,q)=(1-t)^{-1}$, $f_2(t,q)=(1-t)^{-1} f_1(tq,q)=(1-t)^{-1}(1-tq)^{-1}$, $\dots$, $f_d(t,q)=(1-t)^{-1}(1-tq)^{-1}\dots (1-tq^{d-1})^{-1}$.
\end{proof}

Compared to the alternative proof, the \emph{current} proof of Theorem \ref{thm:full_sum} has the advantages that it suffices to work on a fixed $d$, and that the product form of the generating function is manifest. Furthermore, a small modification of this proof gives a summation formula over the forward orbit of any element in $X$ under any finitely generated subsemigroup of $\Gamma$.

\begin{theorem}\label{thm:free_semigroup_orbit}
Let $x_0$ be a configuration in $X$ and $\Gamma'$ be a subsemigroup of $\Gamma$ freely generated by $a^{(1)},\dots,a^{(r)}$. Then
\begin{equation}
\sum_{x\in \Gamma'\cdot x_0} \Cont(x) = \Cont(x_0)\prod_{i=1}^r \frac{1}{1-\Cont(a^{(i)})},
\end{equation}
where the content of an element $a=(a_1,\dots,a_d)$ of $\Gamma$ is defined as
\begin{equation}\label{eq:cont_gamma}
\Cont(a)=t^{a_1+\dots+a_d}q^{0a_1+1a_2+\dots+(d-1)a_d}.
\end{equation}
\end{theorem}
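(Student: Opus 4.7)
The plan is to reduce the sum on the left-hand side to a geometric series over $\N^r$ by combining a multiplicativity property of $\Cont$ with the freeness of the action established earlier.

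First, I would establish the identity
\[
\Cont(a \cdot x) = \Cont(x) \cdot \Cont(a) \qquad \text{for all } a \in \Gamma, \; x \in X,
\]
where $\Cont(a)$ is defined by \eqref{eq:cont_gamma}. Writing $a = (a_1, \dots, a_d)$, so that $a \cdot x = g_1^{a_1} \circ \cdots \circ g_d^{a_d}(x)$, this is immediate by iterating Lemma \ref{lem:size} (each application of $g_j$ adds $1$ to $n$) and Lemma \ref{lem:weight} (each application of $g_j$ adds $j-1$ to $W$): we get $n(a \cdot x) = n(x) + \sum_j a_j$ and $W(a \cdot x) = W(x) + \sum_j (j-1) a_j$.

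Next I would parametrize the orbit. By hypothesis $\Gamma'$ is freely generated by $a^{(1)}, \dots, a^{(r)}$ as a sub-semigroup of $\Gamma$, so $(c_1, \dots, c_r) \mapsto \sum_i c_i a^{(i)}$ is a bijection $\N^r \to \Gamma'$. Corollary \ref{cor:free_anywhere} says $\Gamma$ acts freely on $x_0$, so the composite
\[
\N^r \;\longrightarrow\; \Gamma' \cdot x_0, \qquad (c_1, \dots, c_r) \;\longmapsto\; \Bigl(\textstyle\sum_{i=1}^r c_i a^{(i)}\Bigr) \cdot x_0
\]
is also a bijection. Finally, using Step~1 together with the evident additivity $\Cont(a+b) = \Cont(a)\Cont(b)$ on $\Gamma$ (so $\Cont(\sum_i c_i a^{(i)}) = \prod_i \Cont(a^{(i)})^{c_i}$), the sum becomes
\[
\sum_{x \in \Gamma' \cdot x_0} \Cont(x) \;=\; \Cont(x_0) \sum_{(c_1, \dots, c_r) \in \N^r} \prod_{i=1}^r \Cont(a^{(i)})^{c_i} \;=\; \Cont(x_0) \prod_{i=1}^r \frac{1}{1 - \Cont(a^{(i)})}.
\]

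I do not anticipate a real obstacle here: the substantive work was already done in Theorems \ref{thm:commute} and \ref{thm:free_transitive} (for freeness) and in Lemmas \ref{lem:size} and \ref{lem:weight} (for the statistics). The only subtlety is a convergence check: the geometric-series expansion is valid in $\Z[[t,q]]$ because any freely generating set must consist of nonzero elements of $\Gamma = \N^d$, so each $\Cont(a^{(i)})$ is a monomial of strictly positive $t$-degree.
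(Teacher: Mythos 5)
Your proof is correct and follows essentially the same route as the paper: both reduce the orbit sum to a geometric series over $\N^r$ by combining freeness of the action (Corollary \ref{cor:free_anywhere}), the multiplicativity $\Cont(g^a(x))=\Cont(a)\Cont(x)$ from Lemmas \ref{lem:size} and \ref{lem:weight}, and the additivity $\Cont(a+b)=\Cont(a)\Cont(b)$. Your closing remark on convergence (each $\Cont(a^{(i)})$ has strictly positive $t$-degree since free generators are nonzero) is a small extra observation that the paper leaves implicit.
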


\begin{proof}
Corollary \ref{cor:free_anywhere} implies
\begin{equation}
\sum_{x\in \Gamma'\cdot x_0} \Cont(x) = \sum_{a\in \Gamma'} \Cont(g^a(x_0)).
\end{equation}

Note from Lemma \ref{lem:size} and Lemma \ref{lem:weight} that
\begin{equation}
\Cont(g^a(x))=\Cont(a)\Cont(x)
\end{equation}
for all $a\in \Gamma$ and $x\in X$. Note also that $\Cont(a+b)=\Cont(a)\Cont(b)$ for $a,b\in \Gamma$. 

Since elements of $\Gamma'$ can be traversed by the set of integers $k_1,\dots,k_r\geq 0$ by setting
\begin{equation}
a=k_1 a^{(1)}+\dots+k_r a^{(r)},
\end{equation}
we have
\begin{align}
\sum_{x\in \Gamma'\cdot x_0} \Cont(x) &= \sum_{k_1,\dots,k_r\geq 0} \Cont(g^{k_1 a^{(1)}+\dots+k_r a^{(r)}} x_0) \\
&= \sum_{k_1,\dots,k_r\geq 0} \Cont(k_1 a^{(1)}+\dots+k_r a^{(r)}) \Cont(x_0) \\
&= \Cont(x_0) \sum_{k_1,\dots,k_r\geq 0} \Cont(a^{(1)})^{k_1} \dots \Cont(a^{(r)})^{k_r} \\
&= \Cont(x_0)\prod_{i=1}^r \sum_{k_i\geq 0} \Cont(a^{(i)})^{k_i} \\
&= \Cont(x_0)\prod_{i=1}^r \frac{1}{1-\Cont(a^{(i)})}.
\end{align}
\end{proof}

\begin{corollary}\label{cor:semigroup_orbit}
Let $x_0$ be a configuration in $X$ and $\Gamma'$ be a finitely generated subsemigroup of $\Gamma$. Then $\sum_{x\in \Gamma'\cdot x_0} \Cont(x)$ is a rational function in $t$ and $q$. 
\end{corollary}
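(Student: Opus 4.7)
I plan to reduce the problem to showing that the generating series of $\Gamma'$ itself is rational, and then invoke Hilbert--Serre applied to the semigroup algebra of $\Gamma'$.

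First, by Corollary~\ref{cor:free_anywhere}, the map $a \mapsto g^a(x_0)$ is a bijection from $\Gamma'$ onto $\Gamma' \cdot x_0$. The multiplicativity identity
\[
\Cont(g^a(x_0)) = \Cont(a)\,\Cont(x_0),
\]
which was established in the proof of Theorem~\ref{thm:free_semigroup_orbit} using Lemmas~\ref{lem:size} and~\ref{lem:weight}, then yields
\[
\sum_{x \in \Gamma' \cdot x_0} \Cont(x) = \Cont(x_0) \sum_{a \in \Gamma'} \Cont(a).
\]
Since $\Cont(x_0)$ is a monomial in $t, q$, it suffices to prove rationality of $H(t,q) := \sum_{a \in \Gamma'} \Cont(a)$.

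Next, I interpret $H(t,q)$ as a Hilbert series. The formula \eqref{eq:cont_gamma} exhibits $\Cont$ as a monoid homomorphism $\Gamma \to \N^2$, $a \mapsto (n(a), W(a))$, which pulls back to a bigrading on the semigroup algebra $A := \C[\Gamma']$. Because $n$ already has finite fibers on $\Gamma$, the bigraded components of $A$ are finite-dimensional, and $H(t,q)$ is precisely the bigraded Hilbert series of $A$. Since $\Gamma'$ is finitely generated, say by $a^{(1)}, \dots, a^{(r)}$, there is a bigraded surjection $\C[y_1,\dots,y_r] \onto A$ sending $y_i$ to the generator of $A$ corresponding to $a^{(i)}$, with $y_i$ placed in bidegree $(n(a^{(i)}), W(a^{(i)}))$. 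Hilbert--Serre applied to this presentation then gives
\[
H(t,q) = \frac{p(t,q)}{\prod_{i=1}^r (1 - \Cont(a^{(i)}))}
\]
for some $p(t,q) \in \Z[t,q]$, establishing rationality.

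The main potential obstacle is verifying Hilbert--Serre in the bigraded setting, since the bidegrees $(n(a^{(i)}), W(a^{(i)}))$ may include $0$ in the $q$-slot. One sidesteps this by noting that for each fixed $n$ the $n$-th slice of $A$ under the $n$-grading alone is finite-dimensional over $\C$, and the $q$-grading then refines this finite-dimensional slice, so the bigraded Hilbert series is a well-defined element of $\Z[[t,q]]$ and the standard proof of Hilbert--Serre (by induction on $r$ using the short exact sequence for multiplication by $\chi^{a^{(r)}}$) produces the displayed rational form. A more self-contained alternative, which stays within the combinatorial spirit of the paper, would be to decompose $\Gamma'$ as a finite disjoint union of translates of free sub-semigroups and then apply Theorem~\ref{thm:free_semigroup_orbit} piecewise; the real difficulty there would be producing such a decomposition, e.g.\ by induction on $r$ using the lattice of relations among the generators.
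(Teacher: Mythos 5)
Your proof is correct, and it takes a genuinely different route from the paper's. The paper invokes a structure theorem of Ito \cite[Theorem 2]{ito1969semilinear}, which says that every subsemigroup of $\N^d$ is a finite disjoint union of cosets $a^i\Gamma^i$ of free subsemigroups $\Gamma^i$, and then applies Theorem~\ref{thm:free_semigroup_orbit} to each coset and sums; this is exactly the ``self-contained alternative'' you flag at the end, with Ito's theorem supplying the missing decomposition. You instead pull the multiplicativity $\Cont(g^a(x_0))=\Cont(a)\Cont(x_0)$ out of the proof of Theorem~\ref{thm:free_semigroup_orbit} to reduce to the rationality of $\sum_{a\in\Gamma'}\Cont(a)$, identify this as the bigraded Hilbert series of the semigroup algebra $\C[\Gamma']$, and invoke the multigraded Hilbert--Serre theorem.

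A few remarks on the Hilbert--Serre step. The grading is positive because each generator $a^{(i)}\neq 0$ has $n(a^{(i)})\geq 1$, so the denominator factors $1-\Cont(a^{(i)})$ are never the unit $0$ and the inductive proof (the exact sequence $0\to K\to M(-d_r)\xrightarrow{\cdot y_r} M\to M/y_rM\to 0$ with $K$ and $M/y_rM$ finitely generated over $\C[y_1,\dots,y_{r-1}]$) goes through verbatim in the $\N^2$-graded setting; the possible vanishing of the $q$-slot of the bidegree is harmless exactly as you say. Your approach yields the sharper statement $H(t,q)=p(t,q)/\prod_{i=1}^r\bigl(1-\Cont(a^{(i)})\bigr)$ with $p\in\Z[t,q]$, which refines the corollary and has denominator determined directly by the chosen generators; in the paper's proof the denominator is instead governed by the (not explicitly constructed) free subsemigroups from Ito's decomposition. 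The paper's route stays closer to the combinatorial semigroup-action framework and does not need Theorem~\ref{thm:free_semigroup_orbit} to be unwound, while yours trades Ito's theorem for the (arguably more widely known) Hilbert--Serre machinery. Both are valid; which is preferable is a matter of taste and of what prerequisites one wishes to import.
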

\begin{proof}
By a theorem of Ito \cite[Theorem 2]{ito1969semilinear}, any finitely generated subsemigroup of $\Gamma=\N^d$ is a disjoint union of finitely many cosets of (possibly trivial) free subsemigroups of $\Gamma$. Hence, there are free subsemigroups $\Gamma^1, \dots, \Gamma^r$ of $\Gamma$ and elements $a^1,\dots,a^r$ of $\Gamma$ such that
\begin{equation}
\Gamma'=\bigsqcup_{i=1}^r  a^i \Gamma^i.
\end{equation}

It follows that
\begin{equation}
\sum_{x\in \Gamma'\cdot x_0} \Cont(x) = \sum_{i=1}^r \sum_{x\in \Gamma^i\cdot (a^i x_0)} \Cont(x),
\end{equation}
where the right-hand side is a finite sum of rational functions by Theorem \ref{thm:free_semigroup_orbit}.
\end{proof}

\section{Relation to Smith normal form}\label{sec:smith}
We may translate the language of hlex Gr\"obner bases back to a matrix normal form that is different from the Hermite normal form \eqref{eq:hermite}. In particular, the general form of the reduced Gr\"obner basis described in Section \ref{sec:groebner} implies that every finite-index submodule $M$ of $\Fq[[T]]^d$ can be uniquely expressed as the column span of a matrix of the form
\begin{equation}\label{eq:hlex_normal_form}
M=\im 
\begin{bmatrix}
T^{n_1} & a_{12}(T) & \cdots & a_{1d}(T) \\ 
a_{21}(T) & T^{n_2} & \cdots & a_{2d}(T) \\ 
\vdots & \vdots & \ddots & \vdots \\ 
a_{d1}(T) & a_{d2}(T) & \cdots & T^{n_d}
\end{bmatrix},
\end{equation}
where \eqref{eq:reduced_basis} translates into
\begin{equation}\label{eq:hlex_normal_requirement}
a_{ij}(T) \in 
\begin{cases}
\Span \set{T^b: n_j<b<n_i}, & \text{if }i<j;\\
\Span \set{T^b: n_j\leq b<n_i}, & \text{if }i>j.
\end{cases}
\end{equation}

A special property of this new normal form is that the diagonal entries, with the order forgotten, recover the Smith normal form, or equivalently, the $\Fq[[T]]$-module structure of $\Fq[[T]]^d/M$. This is not the case for the Hermite normal form. For example, consider
\begin{equation}
A= \begin{bmatrix}
T & 0\\
1 & T
\end{bmatrix}.
\end{equation}
Then the Smith normal form of $A$ is $\mathrm{diag}(1,T^2)$ (up to permutation of diagonal entries), not $\mathrm{diag}(T,T)$. 

\begin{proposition}\label{prop:hlex_recovers_smith}
Let $M$ be of the form \eqref{eq:hlex_normal_form}. Then as an $\Fq[[T]]$-module, we have
\begin{equation}
\frac{\Fq[[T]]^d}{M}\cong \frac{\Fq[[T]]}{T^{n_1}}\oplus \dots \oplus \frac{\Fq[[T]]}{T^{n_d}}.
\end{equation}
\end{proposition}

\begin{proof}
Recall that to obtain the Smith normal form, we are allowed to perform row and column operations. Consider $x=(n_1,\dots,n_d)$ and let $i_1,\dots,i_d$ be the seats of $\delta^1(x),\dots,\delta^d(x)$. Look at the $i_1$-th row. Because $\delta^1(x)=(i_1,n_{i_1})$ is the lowest among all $\delta^j(x)$, we observe from \eqref{eq:hlex_normal_requirement} that the $i_1$-th row only has the diagonal entry. We also observe from \eqref{eq:hlex_normal_requirement} that every entry of a column is divisible by the diagonal entry of this column. Therefore, we may use the $i_1$-th row to eliminate the nondiagonal entries of the $i_1$-th column through row operations, without causing other changes. 

We continue the process inductively. Having cleared the nondiagonal entries of the $i_1$-th, $\dots$, $i_{k-1}$-th columns, we look at the $i_k$-th row. Because $\delta_k(x)$ is the lowest among $\delta_j(x)$, $j\geq k$, the entries $a_{i_k,i_j}(T)$ must be zero for $j>k$ for similar reasons. For the entries $a_{i_k,i_j}(T)$ for $j<k$, they are already cleared in the previous steps of the process. Hence the $i_k$-th row only has the diagonal entry, and we may proceed similarly.

After finishing the process at $k=d$, we obtain the Smith normal form $\mathrm{diag}(T^{n_1},\dots,T^{n_d})$ (or any permutation) and we are done.
\end{proof}

\begin{remark}
Both Proposition \ref{prop:groebner} and Proposition \ref{prop:hlex_recovers_smith} hold if we replace $\Fq$ by any field. We did not state them in the this generality to avoid distraction.
\end{remark}

In \cite{petrogradsky2007multiple}, Petrogradsky computed a refinement of the Solomon zeta function that not only remembers the index of a sublattice $M$ of $\Z^d$, but also the $\Z$-module structure of $\Z^d/M$. The essence of Petrogradsky's theorem is a local result that holds for all discrete valuation ring.

\begin{theorem}[Petrogradsky {\cite[Theorem 3.1(5), Theorem 8.1(2)]{petrogradsky2007multiple}}]
Let $R$ be a discrete valuation ring with uniformizer $\varpi$ and residue field $\Fq$. For a finite-index submodule $M$ of $R^d$, we define $\lambda_1(M),\dots,\lambda_d(M)$ by
\begin{equation}
\frac{R^d}{M}\cong \frac{R}{\varpi^{\lambda_1(M)}} \oplus \dots \oplus \frac{R}{\varpi^{\lambda_d(M)}}, \quad \lambda_1(M)\geq \dots\geq \lambda_d(M).
\end{equation}

Consider the series
\begin{equation}
Z(R^d;t_1,\dots,t_d):=\sum_{(R^d:M)<\infty} t_1^{\lambda_1(M)} \dots t_d^{\lambda_d(M)}
\end{equation}
in variables $t_1,\dots,t_d$.

Then $Z(R^d;t_1,\dots,t_d)$ is a rational function in $q,t_1,\dots,t_d$ that depends only on $d$, with denominator dividing $\prod_{j=1}^d (1-z_j)$, where $z_j=q^{j(d-j)}t_1t_2\dots t_j$. 
\label{thm:petrogradsky}
\end{theorem}

We note that $Z(R^d;t,t,\dots,t)$ recovers the Solomon zeta function for $R^d$. The rational formula for $Z(R^d;t_1,\dots,t_d)$ is also given in \cite{petrogradsky2007multiple}, restated below. For a permutation $\pi\in S_d$ of $[d]$, define the set of descents as $D(\pi)=\set{i\in [d-1]: \pi(i)>\pi(i+1)}$, and let $\inv(\pi):=\set{i<j: \pi(i)>\pi(j)}$ denote the number of inversions of $\pi$. Define
\begin{equation}\label{def:wDq}
w_D(q):=\sum_{\pi\in S_d, D(\pi)=D} q^{\inv(\pi)}.
\end{equation}

Then
\begin{equation}\label{eq:petrogradsky}
Z(R^d;t_1,\dots,t_d)=\frac{\sum_{D\in [d-1]} \parens*{w_D(q^{-1})\prod_{j\in D} z_j}}{\prod_{j=1}^d (1-z_j)},
\end{equation}
where $z_j=q^{j(d-j)}t_1t_2\dots t_j$. 

Proposition \ref{prop:hlex_recovers_smith} immediately expresses the Petrogradsky zeta function $Z(R^d;t_1,\dots,t_d)$ as a generating function for the $W$ statistics.

\begin{corollary}\label{cor:refined_sum}
We have
\begin{equation}\label{eq:refined_sum}
\sum_{x\in X} q^{W(x)} t_1^{n(\delta^d(x))} t_2^{n(\delta^{d-1}(x))}\dots t_d^{n(\delta^1(x))} = Z(\Fq[[T]]^d;t_1,\dots,t_d).
\end{equation}
\end{corollary}

Note that $n(\delta^d(x)), \dots, n(\delta^1(x))$ is nothing but sorting $n_1,\dots,n_d$ in descending order, similar to the role of $\lambda_1(M),\dots,\lambda_d(M)$. When specialized at $t_1=\dots=t_d=t$, the left-hand side is equal to $\sum_{x\in X} q^{W(x)} t^{n(x)}$, the subject of Theorem \ref{thm:full_sum_intro}. Notice that in this special case, the denominator given in Theorem \ref{thm:petrogradsky} is not tight: the denominator is $\prod_{j=1}^d \parens*{1-\parens*{q^{d-j}t}^j}$, which is divisible by but not equal to the denominator $\prod_{j=1}^d (1-q^{d-j}t)$ in Theorem \ref{thm:full_sum_intro}.

Our next goal is to compute the left-hand side of \eqref{eq:refined_sum}. This will recover Petrogradsky's theorem, as another application of the spiral shifting operators. We first give a brief overview of several alternative interpretations of the positive-coefficient polynomial $w_D(q)$ given in \cite[\S 5]{petrogradsky2007multiple}.

\subsection{Descents and multiset permutations} 

Given $D\subeq [d-1]$ (recall that we shall view it as a set of descents of some permutation), we introduce some notation. Let $D=\set{d_1<\dots<d_k}$, where $k=\abs{D}$. For $j=0,1,\dots,k$, write $m_j=d_{j+1}-d_j$, where $d_0:=0$ and $d_{k+1}:=d$. 

Cosider a standard $\inv$-preserving bijection frequently used in Schubert calculus:
\begin{equation}\label{eq:multiset_permutation}
\begin{aligned}
\beta_D: \set[\big]{\pi\in S_d: D(\pi)\subeq D} &\to \set[\big]{\sigma\in \mathrm{Map}([d],\set{0,\dots,k}): \abs{\sigma^{-1}(i)}=m_i} \\
\pi &\mapsto (\pi(i)\mapsto\text{unique $j$ such that }d_j<i\leq d_{j+1}).
\end{aligned}
\end{equation}

For example, if $d=5$, $D=\set{2,4}$, $\pi=25|13|4$ (the verticle bars denote positions where descents are allowed; note how they encode $(m_0,m_1,m_2)=(2,2,1)$), then $\beta_D(\pi)$ sends $2,5$ to $0$, and $1,3$ to $1$, and $4$ to $2$. The two-line notation
$
\parens*{
\begin{matrix}
25\\
00
\end{matrix}
\,\vrule\,
\begin{matrix}
13\\
11
\end{matrix}
\,\vrule\,
\begin{matrix}
4\\
2
\end{matrix}
}
$
is instructive. We shall view $\beta(\pi)$ as a permutation of the multiset $0^{m_0}1^{m_1}\dots k^{m_k}$; in this example, $\beta_D(\pi)$ is the multiset permutation $10120$. It is instructive to read $\beta_D(\pi)$ by permutating the columns of the two-line notation above, such that the first row is sorted: $\displaystyle{12345\choose 10120}$. We always have $\inv(\pi)=\inv(\beta_D(\pi))$. In this example, both numbers of inversions are 4.

The map $\beta_D$ induces a bijection
\begin{equation}\label{eq:strict_multiset_permutation}
\set[\big]{\pi\in S_d: D(\pi)=D} \map[\cong] \set[\big]{\sigma: \text{For any $j\in [d]$, there is $i_1<i_2, \sigma(i_1)=j, \sigma(i_2)=j-1$}}.
\end{equation}

We call the latter the set of \textbf{strict} multiset permutations, namely, multiset permutations that has at least a $j$ put before a $j-1$ for any $1\leq j\leq d$. For example, $10120$ is not a strict multiset permutation because every $1$ is before every $2$, and this corresponds to the fact that the second vertible bar of $\pi=25|13|4$ is not actually a descent.

The bijection \eqref{eq:strict_multiset_permutation} gives the alternative formula \cite[Theorem 5.3(1)]{petrogradsky2007multiple}:
\begin{equation}
w_D(q)=\sum_{\sigma\text{ strict}} q^{\inv(\sigma)},
\end{equation}
where the sum is over all strict multiset permutations of $0^{m_0}1^{m_1}\dots k^{m_k}$.

Another formula for $w_D(q)$ can be obtained from a formula of MacMahon \cite[\S 3.4]{andrewspartitions} 
\begin{equation}
\sum_{\pi\in S_d, D(\pi)\subeq D} q^{\inv(\pi)} = {d \brack m_0,m_1,\dots,m_k}_q,
\end{equation}
which can be viewed as the Schubert cell decomposition of the flag variety $\mathrm{Fl}(D;d):=\mathrm{Fl}(d_1,\dots,d_k;d)$. We denote by ${d \brack D}_q$ the Gaussian $q$-multinomial polynomial ${d \brack m_0,m_1,\dots,m_k}_q$. Using the M\"obius inversion formula and the expression of the M\"obius function on the Boolean poset of subsets of $[d-1]$ (see for example, \cite[Proposition 2.44]{orlikterao1992}), one obtains
\begin{equation}\label{def:wDq_petrogradsky}
w_D(q)=\sum_{D'\subeq D} (-1)^{\abs{D}-\abs{D'}} {d \brack D'}_q.
\end{equation}

Petrogradsky \cite[\S 3]{petrogradsky2007multiple} used this instead of \eqref{def:wDq} as the original definition of $w_D(\lambda)$.

\subsection{Alternative proof of Petrogradsky's theorem for $\Fq[[T]]^d$}
We now compute the left-hand side of \eqref{eq:refined_sum}. Let $\Gamma'$ be the subsemigroup of $\Gamma$ (freely) generated by $g_1^d,g_2^{d-1},\dots,g_d^1$. We notice that the effect of $g_{d+1-j}^j$ on $X$ is raising the level of the highest $j$ points of $\Delta(x)$ by $1$ while fixing their seats; this is because $g_{d+1-j}$ is spiral shifting within the highest $j$ elements, so shifting $j$ times completes a full cycle. For $x\in X$, we define the \textbf{seat permutation} of $x$ to be the permutation $i(\delta^1(x)), i(\delta^2(x)), \dots, i(\delta^d(x))$.

\begin{lemma}\label{lem:orbit_decomposition}
There is an orbit decomposition of $X$:
\begin{equation}
X=\bigsqcup_{\pi\in S_d} \Gamma'\cdot x_\pi
\end{equation}
for certain $d!$ elements $x_\pi\in X$. Moreover, each orbit consists precisely of elements of $X$ with seat permutation $\pi$.
\end{lemma}
\begin{proof}
The index of $\Gamma'$ in $\Gamma$ is $d\cdot (d-1) \cdot \dots \cdot 1 = d!$, and $\Gamma$ is clearly a disjoint union of $d!$ cosets of $\Gamma'$. Say $\Gamma=\bigsqcup_{k=1}^{d!} a_k \Gamma'$ for some $a_k\in \Gamma$. Then we have a decomposition of $X$ into $d!$ disjoint $\Gamma'$ orbits:
\begin{equation}
X=\Gamma\cdot \mathbf{0}=\bigsqcup_{k=1}^{d!} \Gamma'\cdot (a_k\cdot \mathbf{0})
\end{equation}

By the discussion above this lemma, each orbit has a constant seat permutation. Let $\pi_k$ be the seat permutation of $a_k\cdot \mathbf{0}$ (and thus the seat permutation of every element in the $k$-th orbit). Every $\pi$ occurs as some $\pi_k$, since there exists an element of any given seat permutation (for instance, if $\pi=i_1 \dots i_d$, then consider the element $x$ with $\Delta(x)=\set{(i_1,1),\dots,(i_d,d)}$). Because there are $d!$ permutations and $d!$ orbits, all $\pi_k$ must be distinct. Hence the proof of the lemma is complete by relabeling. 
\end{proof}

Define
\begin{equation}
\Cont'(x):=q^{W(x)}t_1^{n(\delta^d(x))} t_2^{n(\delta^{d-1}(x))}\dots t_d^{n(\delta^1(x))},
\end{equation}
so that the left-hand side of \eqref{eq:refined_sum} is $\sum_{x\in X} \Cont'(x)$. We have
\begin{align}
\Cont'(g_{d+1-j}^j(x)) &= q^{W(g_{d+1-j}^j(x))} t_1^{n(\delta^d(x))+1} \dots t_j^{n(\delta^{d+1-j}(x))+1} t_{j+1}^{n(\delta^{d-j}(x))} \dots t_d^{n(\delta^1(x))} \\
&= q^{j(d-j)} t_1 t_2 \dots t_j \Cont'(x)\\
&=: z_j \Cont'(x),
\end{align}
where the first equality results from the discussion at the beginning of this section, and the second equality is by Lemma \ref{lem:weight}. The orbit decomposition thus imply the following formula.

\begin{theorem}
We have
\begin{equation}\label{eq:refined_sum_in_content}
\sum_{x\in X} q^{W(x)} t_1^{n(\delta^d(x))} t_2^{n(\delta^{d-1}(x))}\dots t_d^{n(\delta^1(x))}
=
\frac{1}{(1-z_1)\dots (1-z_d)}\sum_{\pi\in S_d} \Cont'(x_\pi).
\end{equation}
\end{theorem}
\begin{proof}
Perform the same proof as Theorem \ref{thm:free_semigroup_orbit} for each orbit in Lemma \ref{lem:orbit_decomposition}.
\end{proof}

In particular, we have proved that $Z(\Fq[[T]]^d;t_1,\dots,t_d)$ is rational with the desired denominator. Moreover, we obtain yet another formula $\sum_{\pi\in S_d} \Cont'(x_\pi)$ for the numerator, which implies that the numerator is a positive-coefficient polynomial in $q,t_1,\dots,t_d$ with total coefficient $d!$ (i.e., evaluates to $d!$ at $q=t_1=\dots=t_d=1$). 

We now verify that \eqref{eq:refined_sum_in_content} does recover Petrogradsky's formula \eqref{eq:petrogradsky}. This means that we shall prove
\begin{equation}
\text{Claim: }\sum_{\pi\in S_d} \Cont'(x_\pi) = \sum_{D\in [d-1]} \parens*{w_D(q^{-1})\prod_{j\in D} z_j}.
\end{equation}

We first observe that $x_\pi$ is given by the strict multiset permutation $\beta_{D(\pi)}(\pi)$. For example, if $d=5$ and $\pi=25|14|3$ (labeling all descents with verticle bars), then $\beta_{D(\pi)}(\pi)=10210$ and $x_\pi=(1,0,2,1,0)$. Indeed, from the orbit decomposition in Lemma \ref{lem:orbit_decomposition}, the element $x_\pi$ must be the ``lowest possible'' element with seat permutation $\pi$. If we write $\pi=i_1i_2\dots i_d$ and $\Delta(x_\pi)=\set{(i_1,n_{i_1})\prec (i_2,n_{i_2}) \prec \dots \prec (i_d,n_{i_d})}$, then $x_\pi$ is obtained from greedily assigning the lowest possible values for $n_{i_1}, n_{i_2}, \dots, n_{i_d}$ such that $(i_1,n_{i_1})\prec (i_2,n_{i_2}) \prec \dots \prec (i_d,n_{i_d})$. The resulting $x_\pi$ is precisely $\beta_{D(\pi)}(\pi)$, because we start at $n_{i_1}=0$ and the level of the next point must rise by $1$ if the seat descends. 

We then claim that 
\begin{equation}
t_1^{n(\delta^d(x_\pi))} t_2^{n(\delta^{d-1}(x_\pi))}\dots t_d^{n(\delta^1(x_\pi))} = \prod_{j\in D} y_{d-j},
\end{equation}
where $y_j:=t_1t_2\dots t_j$. The proof is clearer if illustrated on an example. Consider $\pi=16|235|48|7$, then $D(\pi)=\set{2,5,7}$ and $(m_0,\dots,m_3)=(2,3,2,1)$. We have $\Delta(x_\pi)=\parens*{
\begin{matrix}
16\\
00
\end{matrix}
\,\vrule\,
\begin{matrix}
235\\
111
\end{matrix}
\,\vrule\,
\begin{matrix}
48\\
22
\end{matrix}
\,\vrule\,
\begin{matrix}
7\\
3
\end{matrix}
}$, which reads $\set{(1,0),(6,0),\dots, (7,3)}$. Then reading the levels of $\Delta(x_\pi)$ from high to low, we obtain
\begin{equation}
t_1^{n(\delta^d(x_\pi))} t_2^{n(\delta^{d-1}(x_\pi))}\dots t_d^{n(\delta^1(x_\pi))}=t_1^3 t_2^2 t_3^2 t_4^1 t_5^1 t_6^1 t_7^0 t_8^0 = y_1 y_3 y_6 = \prod_{j\in D} y_{d-j}.
\end{equation}

We now compute $W(x_\pi)$. Write $\pi=i_1i_2\dots i_d$, $D(\pi)=\set{d_1<\dots<d_k}$, and
\begin{equation}\label{eq:two_line_strict_multiset_permutation}
\Delta(x_\pi) = 
\parens*{
\begin{matrix}
i_1 & \dots & i_{d_1} \\
0   & \dots & 0
\end{matrix}
\, \vrule \,
\begin{matrix}
i_{d_1+1} & \dots & i_{d_2} \\
1   & \dots & 1
\end{matrix}
\, \vrule \, \cdots \, \vrule \,
\begin{matrix}
i_{d_k+1} & \dots & i_{d} \\
k   & \dots & k
\end{matrix}.
}
\end{equation}

Note that the sequence $n(\delta^1(x_\pi)), \dots, n(\delta^d(x_\pi))$ is the second row of \eqref{eq:two_line_strict_multiset_permutation}, which only depends on $D=D(\pi)$. We denote the sequence by $n_1(D),\dots,n_d(D)$; it is $m_0$ copies of $0$'s, followed by $m_1$ copies of $1$'s, ..., $m_k$ copies of $k$'s. Then for $a<b$, the columns $\delta^a(x_\pi), \delta^b(x_\pi)$ have distance $n_b(D)-n_a(D)$ if $i_a < i_b$, or the above number minus one if $i_a>i_b$. Hence
\begin{equation}
W(x_\pi) = -\inv(\pi)+ \sum_{1\leq a<b\leq d} (n_b(D)-n_a(D)).
\end{equation}

We shall simplify $\sum_{a<b} (n_b(D)-n_a(D))$. For $0\leq r<s\leq k$, the difference $s-r$ occurs as the difference $n_b(D)-n_a(D)$ for $m_{r}m_{s}$ times. Thus
\begin{equation}
\sum_{a<b} (n_b(D)-n_a(D))=\sum_{0\leq r<s\leq k} m_r m_s (s-r).
\end{equation}

One may verify that
\begin{equation}
\sum_{0\leq r<s\leq k} m_r m_s (s-r) = m_0(m_1+\dots+m_k) + (m_0+m_1)(m_2+\dots+m_k)+\dots+(m_0+\dots+m_{k-1}) m_k
\end{equation}
by noting that the term $m_r m_s$ occurs $s-r$ times on the right-hand side. Recalling that $m_0,\dots,m_k$ are the gaps between consecutive terms of $0,d_1,\dots,d_k,d$, we get
\begin{equation}
\sum_{a<b} (n_b(D)-n_a(D))=\sum_{j\in D} j(d-j).
\end{equation}

Putting the above together, we get
\begin{equation}
\Cont'(x_\pi)=q^{-\inv(\pi)} q^{\sum_{j\in D(\pi)} j(d-j)} \prod_{j\in D(\pi)} y_{d-j} = q^{-\inv(\pi)} \prod_{j\in D(\pi)} z_{d-j}.
\end{equation}

In particular, we obtain a formula for the numerator of Petrogradsky's zeta function as a summation over all partitions of $S_d$ that keeps track of the number of inversions and the set of descents:
\begin{equation}\label{eq:refined_sum_permutation_flipped}
\sum_{x\in X} q^{W(x)} t_1^{n(\delta^d(x))} t_2^{n(\delta^{d-1}(x))}\dots t_d^{n(\delta^1(x))}
=
\frac{1}{(1-z_1)\dots (1-z_d)}\sum_{\pi\in S_d} \parens*{q^{-\inv(\pi)} \prod_{j\in D(\pi)} z_{d-j}}.
\end{equation}

Collecting all $\pi$ with the same $D(\pi)$, we get
\begin{equation}\label{eq:petrogradsky_flipped}
\sum_{x\in X} q^{W(x)} t_1^{n(\delta^d(x))} t_2^{n(\delta^{d-1}(x))}\dots t_d^{n(\delta^1(x))}
=\frac{1}{(1-z_1)\dots (1-z_d)} \sum_{D\subeq [d-1]} \parens*{w_D(q^{-1}) \prod_{j\in D} z_{d-j}}.
\end{equation}

The slight difference between the above formula and \eqref{eq:petrogradsky} can be addressed by considering the involution on the power set of $[d-1]$ defined by $D\mapsto D^t:=\set{d-j:j\in D}$, and the involution on $S_d$ that sends $\pi=i_1 \dots i_d$ to its ``retrograde inversion'' $\pi^t:= (d+1-i_d) (d+1-i_{d-1}) \dots (d+1-i_1)$.\footnote{``Retrograde inversion'' is a musical term that describes such a transformation on a melody. Equivalently, $\pi^t=w_0 \pi w_0$, where $w_0=d(d-1)\dots 1$ is the longest word of $S_d$.} We have $\inv(\pi^t)=\inv(\pi)$ and $D(\pi^t)=D(\pi)^t$, so that $w_D(q)=w_{D^t}(q)$. By a change of variable $D\to D^t$, the formula \eqref{eq:petrogradsky_flipped} implies \eqref{eq:petrogradsky}, and the alternative proof of Petrogradsky's theorem is complete. As a takeaway of the above analysis, we rewrite \eqref{eq:refined_sum_permutation_flipped} in a slightly better form, obtained from \eqref{eq:refined_sum_permutation_flipped} with $\pi\mapsto \pi^t$, and state some functional equations as direct consequences of this form.

\begin{corollary}[\cite{petrogradsky2007multiple}]
We have
\begin{equation}\label{eq:refined_sum_permutation}
Z(\Fq[[T]]^d;t_1,\dots,t_d)=\frac{\sum_{\pi\in S_d} \parens*{q^{-\inv(\pi)} \prod_{j\in D(\pi)} z_{j}}}{\prod_{j=1}^d (1-z_j)},
\end{equation}
where $z_j=q^{j(d-j)}t_1t_2\dots t_j$. Moreover, the numerator is a polynomial in $q^{-1},z_1,\dots,z_{d-1}$ that is invariant under the simultaneous change of variable $z_j\mapsto z_{d-j}$. In particular, $Z(\Fq[[T]]^d;t_1,\dots,t_d)$ is a rational function $Z(q;z_1,\dots,z_d)$ in $q,z_1,\dots,z_d$ that satisfies functional equations
\begin{enumerate}
\item $Z(q;z_{d-1},\dots,z_1, z_d)=Z(q;z_1,\dots,z_{d-1},z_d)$;
\item $Z(q^{-1};z^{-1},\dots,z_d^{-1})=(-1)^d q^{{d\choose 2}} Z(q;z_1,\dots,z_d)$, or equivalently, 
\begin{equation}\label{eq:function_equation}
Z(\Fq[[T]]^d;t_1,\dots,t_d)|_{q\mapsto q^{-1}, t_i\mapsto t_i^{-1}}=(-1)^d q^{{d\choose 2}} Z(\Fq[[T]]^d;t_1,\dots,t_d).
\end{equation}
\end{enumerate}

\end{corollary}

\begin{proof}
The assertion (a) is obvious from the previous discussions. For the assertion (b), notice that the permutation $\pi'=w_0\pi$ where $w_0=d(d-1)\dots 1$ is the longest word satisfies $\inv(\pi')={d \choose 2}-\inv(\pi)$ and $D(\pi')=[d-1]\setminus D(\pi)$. Thus
\begin{equation}
\begin{aligned}
Z(q^{-1};z_1^{-1},\dots,z_d^{-1})&= \frac{\sum_{\pi\in S_d} \parens*{q^{\inv(\pi')} \prod_{j\in D(\pi')} z_{j}^{-1}}}{\prod_{j=1}^d (1-z_j^{-1})} \\
&=\frac{(-1)^d z_1\dots z_d}{\prod_{j=1}^d (1-z_j)} \cdot \sum_{\pi\in S_d} \parens*{q^{{d \choose 2}-\inv(\pi)}  \cdot z_1^{-1}\dots z_d^{-1} \prod_{j\in D(\pi)} z_{j}}\\
&=(-1)^d q^{{d\choose 2}} Z(q;z_1,\dots,z_d).
\end{aligned}
\end{equation}

The final equivalent statement is because as $q,t_1,\dots,t_j\mapsto q^{-1},t_1^{-1},\dots,t_d^{-1}$, we have $z_j\mapsto z_j^{-1}$. 
\end{proof}

\begin{remark}\label{rmk}
As a final remark, we compare our proof to other known proofs of \eqref{eq:refined_sum_permutation}.  Petrogradsky's proof builds upon results that for each abelian $p$-group $G$, count subgroups $H\subeq G$ of a given cotype (i.e., the isomorphism class). His proof arrives at a formula in terms of Gaussian $q$-multinomial polynomials (\eqref{eq:petrogradsky} but with $w_D(q)$ defined by \eqref{def:wDq_petrogradsky}), which turns out somewhat miraculously to be interpretable as a permutation sum, and hence the numerator of \eqref{eq:petrogradsky} has nonnegative coefficients. The work of du Sautoy and Lubotzky \cite[Theorem 5.9]{dusautoylubotzky} recovers \eqref{eq:refined_sum_permutation} as a special case where $G=\GL_d$ and $\rho$ is the standard representation. Their proof uses $p$-adic integral over $\GL_d(\Zp)$ and the Iwahori decomposition, and reaches the permutation sum (or more generally, a sum over the (affine) Weyl group) and the functional equation \eqref{eq:function_equation}. See also the concluding remark of \cite[\S 2]{ckk2017cotype}. Our proof is based on a matrix normal form \eqref{eq:hlex_normal_form}, and it leads to the permutation sum of \eqref{eq:refined_sum_permutation} via the orbit decomposition in Lemma \ref{lem:orbit_decomposition}. 
\end{remark}

\section*{Acknowledgements}
We thank Jason Bell and Nathan Kaplan for informing the authors about \cite{ito1969semilinear, petrogradsky2007multiple}, respectively. We thank helpful conversations with Farid Aliniaeifard, Jim Bryan, Gautam Chinta, Daniel Erman, Asvin G, Jeffery Lagarias, Mark Shimozono, Yifan Wei, Stephanie van Willigenburg and Michael Zieve. The first author thanks the AMS-Simons Travel Grant.

\end{document}